\documentclass[12pt]{article}
\usepackage{graphicx}
\usepackage{epstopdf}
\usepackage{subfigure}
\usepackage{color}
\usepackage[svgnames]{xcolor}
\usepackage[english]{babel}
\usepackage{amsmath,amsthm}
\usepackage{array}
\usepackage{multirow}
\usepackage{amssymb}
\usepackage{isomath}
\usepackage{blkarray}
\usepackage{cancel}
\usepackage{extarrows}

\usepackage{amsfonts}
\usepackage{booktabs}
\usepackage{algorithm}
\usepackage{algorithmic}
\usepackage{pifont}
\usepackage{bm} 
\usepackage{authblk}
\usepackage{appendix}
\usepackage{mathtools}

\usepackage{caption} 
\usepackage{graphicx}
\usepackage{subfigure}
\usepackage{tabularx}
\usepackage[colorlinks=true, linkcolor=red, citecolor=green]{hyperref}
\usepackage{url}
\newtheorem{thm}{Theorem}[section]
\newtheorem{defi}{Definition}[section]

\newtheorem{lem}[thm]{Lemma}

\newtheorem{coro}{Corollary}[section]

\newtheorem{Problem}{Problem}[section]

\title{Partial Petrial Polynomials of Bouquets}

\author{Xiaoxiang Yu$^{a}$, Rong-Xia Hao$^{a}$, Jianbing Liu$^{a, }$\thanks{Corresponding author}, Zhiguo Li$^{b}$,\\
\small $^{a}$School of Mathematics and Statistics,\\
\small Beijing Jiaotong University, Beijing 100044, P. R. China\\
\small $^b$School of Science,\\
\small Hebei University of Technology, Tianjin 300401, P. R. China\\
\small Email: yuxiaoxiang1126@163.com, rxhao@bjtu.edu.cn,\\ \small jbliu1@bjtu.edu.cn, zhiguolee@hebut.edu.cn}

\date{}

\begin{document}
\baselineskip 0.65cm
\maketitle

\vspace{-2.8em}
\begin{abstract}
Gross, Mansour, and Tucker [European J. Combin., 95 (2021): 103329] introduced the \emph{partial Petrial polynomial} of a ribbon graph $G$, denoted by $^{\partial}{\varepsilon^{\times}_{G}}(z)$. 
For a prime bouquet $B_n$, Yan and Li [Discrete Appl. Math., 375 (2025): 281–289] determined $^{\partial}{\varepsilon^{\times}_{B_n}}(z)$ when the intersection graph $I(B_n)$ is either the complete graph or a path, and provided an equivalent condition under which the lowest degree of the nonzero coefficient in $^{\partial}{\varepsilon^{\times}_{B_n}}(z)$ is $1$. 
In this paper, we determine $^{\partial}{\varepsilon^{\times}_{B_n}}(z)$ when the intersection graph is a cycle. 
Moreover, we present a complete characterization of the prime bouquets whose lowest nonzero term in $^{\partial}{\varepsilon^{\times}_{B_n}}(z)$ is of degree $2$ and determine the partial Petrial polynomial for the prime bouquets. As corollaries, we determine $^{\partial}{\varepsilon^{\times}_{B_n}}(z)$ when $I(B_n)$ is the complete bipartite and tripartite graph.
\vspace{0.3em}

\noindent\textbf{Keywords:} Partial Petrial polynomial, ribbon graph, matrix rank, Euler genus, intersection graph, cycle

\noindent\textbf{2020 MSC:} 05C10, 05C30, 05C31, 57M15
\end{abstract}

\section{Introduction}
Throughout this paper, all graphs considered are connected multigraphs (i.e., may contain loops and parallel edges). A {\em ribbon graph} $G$ is a surface with boundary represented as the union of two sets of closed topological discs, called vertex-discs $V(G)$ and edge-ribbons $E(G)$, satisfying the following conditions:
\begin{itemize}	
\vspace{-0.5em}
\item [(1)] the vertex-discs and edge-ribbons intersect in disjoint line segments;
\vspace{-0.5em}
\item [(2)] each such line segment lies on the boundary of precisely one vertex-disc and precisely one edge-ribbon;
\vspace{-0.5em}
\item [(3)] every edge-ribbon contains exactly two such line segments.
\end{itemize}

\vspace{-0.6em}
\noindent This definition, given by Bollob\'{a}s and Riordan \cite{2002_Ribbon graph}, provides an equivalent representation of a graph cellularly embedded in a surface.

The operation \textit{Petrial} was first introduced by Wilson in 1979 \cite{Introduction_Petrial}. Specifically, the Petrial of a ribbon graph $G$, denoted $G^{\times}$, is obtained by detaching one end of each edge from its incident vertex-disc, applying a half-twist to the edge, and reattaching it to the vertex-disc. When this operation is applied only to a subset $A$ of $E(G)$, the resulting graph  is called a \textit{partial Petrial} $G^{\times|A}$ of $G$ with respect to $A$. In 2021, Gross, Mansour, and Tucker \cite{2021_II} introduced the \textit{partial Petrial polynomial}, which enumerates all partial Petrials of a ribbon graph according to their Euler genera $\varepsilon(G^{\times|A})$. It is defined as follows.

\vspace{-0.3em}
\begin{defi}\emph{(\cite{2021_II})}\label{poly_Petrial}
The {partial Petrial polynomial} of any ribbon graph $G$ is the generating function
$$^{\partial}{\varepsilon^{\times}_{G}}(z)=\sum\limits_{A\subseteq E(G)}z^{\varepsilon{(G^{\times|A})}}$$
\noindent that enumerates partial Petrials of $G$ by Euler genus. 
\end{defi}

\vspace{-0.3em}
\noindent In \cite{2021_II}, Gross, Mansour, and Tucker  showed that the polynomial $^{\partial}{\varepsilon^{\times}_{G}}(z)$ is \emph{interpolating}, meaning that its nonzero coefficients correspond to consecutive powers of the variable. They also introduced the restricted orientable partial Petrial polynomial of $G$ by restricting the generating function to $A\subseteq E(G)$ such that $G^{\times|A}$ is orientable. They asked that whether the restricted-orientable partial Petrial polynomial of an arbitrary ribbon graph $G$ is even-interpolating. Later, Chen et al. \cite{Counterexample_ChenYichao} gave a negative answer.

The {\em join} $G_1\vee G_2$ of two disjoint ribbon graphs $G_1$ and $G_2$ is obtained by selecting an arc $p_i$ on the boundary of a vertex-disk in each $G_i$ for $i=1,2$ between two consecutive ribbon ends and identifying  $p_1$ and $p_2$ to merge the two vertex-disks into one \cite{Join_Moffatt}.~A ribbon graph $G$ is called {\em prime} if there do not exist non-empty subgraphs $G_1,G_2$ such that $G=G_1\vee G_2$.~Gross, Mansour, and Tucker \cite{2021_II} showed that the partial Petrial polynomial respects the join operation.~Specifically,~$^{\partial}{\varepsilon^{\times}_{G_1\vee G_2}}(z)=^{\partial}{\varepsilon^{\times}_{G_1}}(z)\cdot^{\partial}{\varepsilon^{\times}_{G_2}}(z)$. 
Thus, the study of the partial Petrial polynomial for a ribbon graph reduces to its study for prime ribbon graphs.~
Furthermore, they derived explicit formulas and recursive relations for certain families of ribbon graphs, including ladder graphs.


A \textit{bouquet} is a ribbon graph with a single vertex.~
Yan and Li \cite{Petrial_Yan} showed that the partial Petrial polynomial of a bouquet is determined by its intersection graph.~This means that two bouquets with the same intersection graph have identical partial Petrial polynomial. They also computed this polynomial of the bouquets whose intersection graphs are either a complete graph or a path. 

\vspace{-0.3em}
\begin{thm}\emph{(\cite{Petrial_Yan})}\label{first order_K_n_Petrial}
	Let $B_n$ be a prime bouquet with $n$ edges, where $n \geq 2$. Then
	\vspace{-0.5em}
	\[^{\partial}\varepsilon_{B_n}^{\times}(z) =  a_1z + a_2z^2 + \cdots + a_nz^n,\]
	
	\vspace{-0.5em}
	\noindent where $a_i \neq 0$ for all $1 \leq i \leq n$, if and only if $I(B_n)$ is a complete graph.
\end{thm}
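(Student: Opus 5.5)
The plan is to turn the polynomial into a rank-generating function over $\mathrm{GF}(2)$ and then argue with elementary linear algebra. For a bouquet $B$ with loops $e_1,\dots,e_n$, let $A$ be the adjacency matrix of $I(B)$ and let $D_B$ be the diagonal $0/1$ matrix recording which loops are twisted. I will use the standard fact that $\varepsilon(B)=\operatorname{rank}_{\mathrm{GF}(2)}(A+D_B)$. Forming the partial Petrial with respect to $X\subseteq E(B)$ toggles the twist of each loop in $X$ and does not change which loops interlace. Hence
\[
^{\partial}\varepsilon^{\times}_{B_n}(z)=\sum_{d\in\{0,1\}^n} z^{\operatorname{rank}(A+\operatorname{diag}(d))},
\]
because $D_B+\operatorname{diag}(\mathbf{1}_X)$ ranges over all diagonal $0/1$ matrices as $X$ ranges over subsets of $E(B)$. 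This is exactly the statement of Yan and Li that the polynomial depends only on $I(B_n)$. Since the polynomial is interpolating by Gross, Mansour and Tucker, the claim reduces to two conditions. Condition (i) is that the minimum rank over diagonal perturbations is $1$. Condition (ii) is that the maximum rank over diagonal perturbations is $n$.

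First I use primeness. A bouquet is prime exactly when $I(B_n)$ is connected, so for $n\ge 2$ the matrix $A$ has some off-diagonal $1$. Therefore every rank is at least $1$, and the lowest possible degree is $1$.

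For necessity, suppose $a_1\neq 0$. Then some $A+\operatorname{diag}(d)$ has rank $1$, so it equals $vv^{T}$ for a nonzero $v\in\mathrm{GF}(2)^n$. Off the diagonal this gives $A_{ij}=v_iv_j$, so every edge of $I(B_n)$ joins two vertices of $\operatorname{supp}(v)$ and no vertex outside the support has any neighbour. Connectivity together with $n\ge 2$ then forces $\operatorname{supp}(v)$ to be the whole vertex set, so $A_{ij}=1$ for all $i\neq j$ and $I(B_n)\cong K_n$.

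For sufficiency, assume $I(B_n)=K_n$, so $A=J+I$. Taking $d=\mathbf{1}$ gives $A+I=J$, which has rank $1$, so $a_1\ne0$. For condition (ii) I need one diagonal $c$ with $J+\operatorname{diag}(c)$ nonsingular. If $n$ is even, $J+I$ works, since $(J+I)^2=nJ+I=I$. If $n$ is odd, take $c=(0,1,\dots,1)$, so the rows are $r_1=\mathbf{1}$ and $r_i=\mathbf{1}+e_i$ for $i\ge2$. Suppose a relation $a r_1+\sum_{i\in S}r_i=0$ holds with $S\subseteq\{2,\dots,n\}$. Then $\sum_{i\in S}e_i$ must be a constant vector. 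Because $1\notin S$, its first coordinate is $0$, so $\sum_{i\in S}e_i=0$ and $S=\emptyset$, which then forces $a=0$. Hence this matrix has full rank, and $a_n\neq0$. By interpolation, every $a_i$ with $1\le i\le n$ is nonzero.

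The main obstacle is the first step: justifying the rank formula for the Euler genus of a signed bouquet, together with the fact that partial Petriality acts only on the diagonal. I would cite this as the standard (Mohar / Yan–Li) description, or verify it by counting boundary components of a one-vertex ribbon graph via the signed chord diagram. After that, the remaining steps are the short linear-algebra arguments above.
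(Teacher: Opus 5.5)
The paper does not prove this statement; it quotes it from Yan and Li. There is therefore no in-text proof to match, and your argument is correct as it stands.

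Your route is algebraic and is essentially what the paper's own toolkit gives. The step you flag as the main obstacle is exactly Lemma~\ref{rankofpoly_Petrial}, which the paper derives from Lemmas~\ref{varepsilon} and~\ref{rank=1+e-f}. Your key observation is that a symmetric rank-one matrix over $\mathrm{GF}(2)$ equals $vv^{T}$, so $I(B_n)$ must be a clique on $\operatorname{supp}(v)$ plus isolated vertices. This replaces what a topological proof would do: classify prime bouquets of Euler genus $1$ by their signed rotations (all loops twisted and pairwise interlaced) and read off the intersection graph. That is the style the paper uses for the degree-two analogue, Theorem~\ref{thm:main2}, via Lemma~\ref{formofsurfaces}.

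What each route buys:
\begin{itemize}
\item Yours needs no surface classification. It also gives $a_1=1$ for free: when $I(B_n)=K_n$ the only rank-one choice is $v=\mathbf{1}$, $d=\mathbf{1}$, consistent with Theorem~\ref{K_n}.
\item The topological route gives a geometric picture of the genus-minimizing partial Petrial.
\end{itemize}

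Two minor remarks:
\begin{itemize}
\item Your condition (ii) holds for every connected ribbon graph by Lemma~\ref{highest-term of Petrial}, since the top degree is $|E|-|V|+1=n$ for a bouquet. Your explicit nonsingular choice for $K_n$ is therefore correct but unnecessary.
\item In the necessity step you use the direction ``prime $\Rightarrow$ $I(B_n)$ connected'', which is the less obvious half of that equivalence. It deserves a citation or a one-line chord-diagram argument: another component lies in one arc between consecutive ends of a fixed component, so it splits off as a join factor.
\end{itemize}
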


\vspace{-1em}
\begin{thm}\emph{(\cite{Petrial_Yan})}\label{K_n}
	Let $B_n$ be a prime bouquet  such that $I(B_n)=K_n$, where \(n \geq 2\). Then  
	\[^{\partial}\varepsilon_{B_n}^{\times}(z) = 
	\begin{cases} 
		z^n + \sum_{i=1}^n \binom{n}{n+1-i} z^i, & \text{if } n \text{ is even}, \\ 
		z^{n-1} + \sum_{i=1}^n \binom{n}{n+1-i} z^i, & \text{if } n \text{ is odd}. 
	\end{cases}\]
\end{thm}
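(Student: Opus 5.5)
We compute the Euler genus of each partial Petrial $B^{\times|A}$ via the standard matrix-rank description of bouquets. For a bouquet $B$ with edge set $E=\{e_1,\dots,e_n\}$, let $M(B)$ be the $n\times n$ matrix over $\mathrm{GF}(2)$ whose off-diagonal entries form the adjacency matrix of the intersection graph $I(B)$. Its diagonal entry at $e_i$ is $1$ exactly when $e_i$ is twisted (non-orientable). It is known (and is what underlies the fact, quoted above from \cite{Petrial_Yan}, that the polynomial depends only on $I(B_n)$) that $\varepsilon(B)=\operatorname{rank}_{\mathrm{GF}(2)} M(B)$.

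Taking the partial Petrial with respect to $A$ does not change which pairs of edges interlace. It only toggles the twist of each edge in $A$. Hence $M(B^{\times|A})=M(B)+D_A$, where $D_A$ is the diagonal indicator matrix of $A$. As $A$ ranges over all subsets of $E$, the diagonal of $M(B)+D_A$ ranges over all of $\mathrm{GF}(2)^n$. Therefore
\[^{\partial}\varepsilon^{\times}_{B_n}(z)=\sum_{d\in\mathrm{GF}(2)^n} z^{\operatorname{rank}(J+I+D_d)},\]
where for $I(B_n)=K_n$ the adjacency matrix is $J+I$, with $J$ the all-ones matrix.

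Now fix $d$ with weight $k$, meaning exactly $k$ diagonal entries equal $1$ after the toggle. The matrix $N=J+I+D_d$ has diagonal entries $1$ in $k$ positions and $0$ in $n-k$ positions, and all off-diagonal entries equal to $1$. Equivalently, $N=J+D'$, where $D'$ is diagonal with $0$ in the $k$ positions and $1$ in the other $n-k$ positions.

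\begin{itemize}
\item The $k$ rows with diagonal $1$ are all equal to the all-ones vector $\mathbf{1}$.
\item The other rows are $\mathbf{1}+e_j$ for the $n-k$ indices $j$ outside the $k$ positions.
\end{itemize}

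So the row space is spanned by the vectors $\mathbf{1}+e_j$ ($j$ outside), together with $\mathbf{1}$ if $k\ge1$.

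\textbf{Case $k\ge1$.} The row space equals $\operatorname{span}\{\mathbf{1},e_j : j\notin\text{the }k\}$. These vectors are independent, because $\mathbf{1}$ has support on the $k$ positions, which no $e_j$ touches. So the rank is $n-k+1$. The number of such $d$ is $\binom nk$. Substituting $i=n-k+1$ gives the terms $\binom{n}{n+1-i}z^i$ for $i=1,\dots,n$.

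\textbf{Case $k=0$.} Here $N=J+I$, and the rows $\mathbf{1}+e_j$ span a space of dimension $n$ or $n-1$. Their sum is $n\mathbf{1}+\mathbf{1}=(n+1)\mathbf{1}$.
\begin{itemize}
\item If $n$ is even, $J+I$ is invertible, since $(J+I)^2=nJ+2J+I=I$. This contributes $z^n$.
\item If $n$ is odd, the sum of all rows is $0$. Any $n-1$ of the rows are independent, since the differences $e_i+e_j$ of the rows $\mathbf{1}+e_j$ already span the even-weight space of dimension $n-1$. So the rank is $n-1$, contributing $z^{n-1}$.
\end{itemize}
Adding the two cases gives the stated formula.

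The main obstacle is justifying the identity $\varepsilon(B)=\operatorname{rank}M(B)$. I will cite the known result: the number of boundary components of a bouquet is $n+1-\operatorname{rank}M(B)$, and Euler's formula then gives $\varepsilon=2-(1-n+f)=\operatorname{rank}M(B)$. The remaining work is the linear algebra above, where care is needed only in the degenerate case $k=0$.
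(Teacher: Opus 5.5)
Your proof is correct and takes essentially the route the paper itself indicates. The theorem is quoted from Yan and Li, but the conclusion remarks that it follows from the rank representation of Lemma~\ref{rankofpoly_Petrial}, which you re-derive from Lemmas~\ref{varepsilon} and~\ref{rank=1+e-f}; your case $k=0$ is then Lemma~\ref{rankofK_n}, and your even-weight-space argument for odd $n$ also supplies the lower bound $\operatorname{rank}\ge n-1$ that the paper's determinant proof of that lemma leaves implicit.
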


\noindent If the intersection graph is a path, then the polynomial is binomial. 
\vspace{-0.8em}
\begin{thm}\emph{(\cite{Petrial_Yan})}\label{P_n}
	Let $B_n$ be a prime bouquet with $n$ edges such that $I(B_n)=P_n$, $n\geq 1$. Then 
	\vspace{-0.6em}
	\[^{\partial}\varepsilon_{B_n}^{\times}(z) = 
	\begin{cases} 
		\left( \frac{2^n-1}{3} \right) z^{n-1} + \left( \frac{2^{n+1}+1}{3} \right) z^n, & \text{if } n \text{ is even}, \\ 
		\left( \frac{2^n+1}{3} \right) z^{n-1} + \left( \frac{2^{n+1}-1}{3} \right) z^n, & \text{if } n \text{ is odd}. 
	\end{cases}\]
\end{thm}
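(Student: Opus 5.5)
The plan is to turn the partial Petrial polynomial of a bouquet into a count of binary matrices by their rank over $\mathrm{GF}(2)$, and then use the tridiagonal structure that the path $P_n$ forces on those matrices.

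\textbf{Step 1: genus as a rank.} For a bouquet $B$ with $n$ loops, I attach the symmetric $n\times n$ matrix $M(B)$ over $\mathrm{GF}(2)$. Its off-diagonal part is the adjacency matrix of $I(B)$. Its $(i,i)$ entry is $1$ exactly when loop $i$ is twisted. I will use the standard fact that the number of boundary components of $B$ is $n+1-\operatorname{rank}_{\mathrm{GF}(2)} M(B)$. With one vertex and $n$ edges, this gives
\[\varepsilon(B)=2-1+n-f(B)=\operatorname{rank} M(B).\]
This is the same mechanism behind the observation of Yan and Li that the polynomial depends only on $I(B)$.

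\textbf{Step 2: effect of a partial Petrial.} Taking the Petrial of a single loop only flips whether that loop is twisted. The cyclic order of the ends, and hence interlacement, is unchanged. So $M(B^{\times|A})=M(B)+D_A$, where $D_A$ is the diagonal indicator matrix of $A$. As $A$ ranges over all subsets of $E(B)$, the diagonal ranges over all of $\{0,1\}^n$ exactly once. Writing $T_n(d)$ for the path adjacency matrix $A(P_n)$ with diagonal $d=(d_1,\dots,d_n)$, we get
\[{}^{\partial}\varepsilon^{\times}_{B_n}(z)=\sum_{d\in\{0,1\}^n} z^{\operatorname{rank} T_n(d)}.\]

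\textbf{Step 3: only two ranks occur.} Delete the first row and last column of $T_n(d)$. What remains is triangular with all diagonal entries equal to $1$, because the superdiagonal of a path matrix consists of ones. Hence $\operatorname{rank} T_n(d)\ge n-1$, and the rank is $n$ or $n-1$ according as $\det T_n(d)$ is $1$ or $0$.

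\textbf{Step 4: counting singular matrices.} Let $\Delta_k$ be the leading principal $k\times k$ minor, with $\Delta_0=1$ and $\Delta_{-1}=0$. Over $\mathrm{GF}(2)$ these satisfy
\[\Delta_k=d_k\Delta_{k-1}+\Delta_{k-2}.\]
If two consecutive minors were both $0$, the recursion would propagate $0$ down to $\Delta_0$, a contradiction. Let $u_n$ be the number of $d\in\{0,1\}^n$ with $\Delta_n=0$, and split on $\Delta_{n-1}$:
\begin{itemize}
\item If $\Delta_{n-1}=0$, then $\Delta_n=\Delta_{n-2}=1$ for both choices of $d_n$.
\item If $\Delta_{n-1}=1$, which happens for $2^{n-1}-u_{n-1}$ prefixes, exactly one choice of $d_n$ makes $\Delta_n=0$.
\end{itemize}
So $u_n=2^{n-1}-u_{n-1}$ with $u_1=1$. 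This solves to $u_n=(2^n-(-1)^n)/3$. Then $u_n$ is the coefficient of $z^{n-1}$ and $2^n-u_n=(2^{n+1}+(-1)^n)/3$ is the coefficient of $z^n$, which matches both parity cases of Theorem~\ref{P_n}.

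The main obstacle is Step 1. If the paper treats the rank formula for faces of a one-vertex ribbon graph as known, it can simply be cited. Otherwise I would prove it by induction on $n$, deleting a loop and tracking how the boundary components merge or split. The counting in Steps 3 and 4 is routine.
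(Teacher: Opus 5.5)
Your proof is correct. The paper itself gives no proof of this statement. It quotes the result from Yan and Li, together with Lemma~\ref{faceofpath}, and uses it only as input to Theorem~\ref{thm:main1}.

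Your Steps~1--2 coincide with the paper's Lemma~\ref{rankofpoly_Petrial}, which rests on the cited face--rank identity of Lemma~\ref{rank=1+e-f}. So the ``main obstacle'' you flag can simply be cited. Your observation that $d\mapsto d+\mathbf{1}_A$ permutes $\{0,1\}^n$ is slightly cleaner than the paper's route, which first reduces to an orientable base bouquet via Lemma~\ref{G,GA}.

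What your route buys is self-containment. The triangular submatrix in Step~3 gives $\operatorname{rank}\ge n-1$. Via $|F|=n+1-\operatorname{rank}$, this is exactly the imported Lemma~\ref{faceofpath}. The continuant recursion $\Delta_k=d_k\Delta_{k-1}+\Delta_{k-2}$, combined with the no-two-consecutive-zeros observation, counts the two-face partial Petrials as $(2^n-(-1)^n)/3$, and the case $n=1$ is covered too.

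The same linear algebra would also verify the hypothesis of Lemma~\ref{(1+z)*} in the proof of Theorem~\ref{thm:main1} rigorously, without the figure-based face argument. A null vector of a singular path matrix must have both end coordinates equal to $1$. Hence the border column of the deleted cycle loop, which is the indicator of its neighbours $1$ and $n-1$, lies in the column space of the symmetric path matrix. The two choices of that loop's diagonal entry then give ranks $r$ and $r+1$.
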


\vspace{-0.3em}
\indent We extend the work of \cite{Petrial_Yan} by determining the partial Petrial polynomial for bouquets whose intersection graphs are cycles. We also provide a complete characterization of the prime bouquets for which the lowest degree among the nonzero coefficients in the partial Petrial polynomial is $2$. Furthermore, we explicitly compute $^{\partial}{\varepsilon_{B_n}^{\times}}(z)$ if the lowest-degree term of  $^{\partial}{\varepsilon_{B_n}^{\times}}(z)$ is of degree $2$.\\
\indent The rest of this paper is organized as follows.~Section~2 introduces the necessary notations and foundational properties used throughout the paper.~Section~3 develops a recursion formula for the partial Petrial polynomial of bouquets whose intersection graphs are cycles (Theorem \ref{thm:main1}). Section~4 provides a characterization of the bouquet $B_n$ and presents exact explicit expression of the partial Petrial polynomial $^{\partial}{\varepsilon^{\times}_{B_n}}(z)$ if it contains a second-order term (Theorems \ref{thm:main3} and \ref{thm:main4}). As corollaries, we determine $^{\partial}{\varepsilon^{\times}_{B_n}}(z)$ when $I(B_n)$ is the complete bipartite and tripartite graph.

\section{Preliminaries}
In this section, 
the foundational definitions and some lemmas  
necessary for our main result are given. 
Unless stated otherwise, all graphs are assumed to be finite and undirected.
\subsection{Basic Definitions and Notation}
For a ribbon graph $G$, we denote its set of  faces and connected components by  $F(G)$ and $c(G)$, respectively. A ribbon graph is considered {\em empty} if its edge set is empty. The {\em Euler characteristic} of $G$ is given by $\chi(G)=|V(G)|-|E(G)|+|F(G)|$, and its {\em Euler genus} is $\varepsilon(G)=2c(G)-\chi(G)$. 

A loop of a bouquet is {\em twisted} if it is homeomorphic to a M\"obius band and {\em untwisted} if it is homeomorphic to an annulus.~The {\em signed rotation} of a bouquet is a cyclic ordering of its half-edges at the vertex-disk, with each half-edge assigned a sign, where the two half-edges of an untwisted loop have the same sign (the sign can be omitted), while those of a twisted loop have different signs. Two edges in a bouquet are {\em interlaced} if their half-edges alternate in the signed rotation. A loop is {\em trivial} if it is not interlaced with any other loop.
The orientable and nonorientable bouquets with Euler genus $2$ are called {\em toroidal bouquets} and {\em Klein bottle bouquets}, respectively.

A sign graph is a graph whose vertices are labeled with the sign $+$ or $-$.  The \textit{intersection graph} $I(B_n)$ of a bouquet $B_n$ has the loops of $B_n$ as its vertex set, and an edge exists between two vertices if the corresponding loops are interlaced. The \emph{signed intersection graph} $SI(B_n)$ is a variant of $I(B_n)$ where each vertex is assigned a sign corresponding to its loop type $+$ for untwisted, $-$ for twisted. 
The \emph{adjacency matrix} $\text{adj}(SI(B_n))$ of the signed intersection graph of the bouquet $B_n$ is obtained from the adjacency matrix $\text{adj}(I(B_n))$ of the intersection graph $I(B_n)$ by assigning $1$ to the diagonal entries corresponding to vertices of $SI(B_n)$ with sign $-$.\\
\indent Throughout this paper, let $\mathbb{Z}_n$ be the set $\{1,2,\cdots,n\}$. 
Let $\bm{O}_{s, t}$, $\bm{I}_{s, t}$, and $\bm{J}_{s, t}$ denote the $s \times t$ zero matrix, identity matrix, and all-ones matrix, respectively. Let $n,k$ be positive integers with $n \geq k$, and let $A \subseteq \mathbb{Z}_n$. We define $\bm{D}_{n,A}$ as the $n \times n$ diagonal matrix whose $(i,i)$-th entry is $1$ if $i \in A$ and $0$ otherwise. In particular, for $A = \{1,2,\ldots,k\}$, we write $\bm{D_{n,A}}$ as $\bm{D}_{n,k} := \bm{D}_{n,{\{1,2,\ldots,k\}}}$.\\
\indent For a polynomial $g(x)=\sum_{i=0}^{n}a_ix^{i}$, the {\em support} of the polynomial $g(x)$ is the set $supp(g)=\{i|a_i\neq 0\}$ of indices of the non-zero coefficients. Let 
$s=\min {supp(g)}$, $t=\max {supp(g)}$. 
The polynomial $g(x)$ is called interpolating if $supp(g)=\{j|s\leq j\leq t,j\in \mathbb{Z}\}$.
\subsection{Key Lemmas}

In this subsection, we provide some key lemmas for our arguments.

We now recall the well-known formula for calculating the Euler genus of a ribbon graph.
\vspace{-0.2em}
\begin{lem}\emph{(\cite{Yanqi_Intersection})}\label{varepsilon_ribbon}
Let $G$ be a ribbon graph. We have $\varepsilon(G)=2c(G)-|V(G)|+|E(G)|-|F(G)|$.
\end{lem}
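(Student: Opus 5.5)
We derive the formula directly from the definitions via Euler's formula for a cellularly embedded graph. Fix a ribbon graph $G$ and view it as a surface with boundary. Each connected component of $G$ is a compact surface with boundary. Capping every boundary component with a disc gives a closed surface $\Sigma$ with one component per component of $G$. By construction, the vertex-discs, edge-ribbons and capping discs give a cell decomposition of $\Sigma$. The vertex-discs become the $0$-cells and the edge-ribbons become the $1$-cells; the boundary components of $G$, i.e.\ the faces $F(G)$, become the $2$-cells. Thus $\chi(\Sigma)=|V(G)|-|E(G)|+|F(G)|$, which is exactly the quantity $\chi(G)$ defined in the preliminaries.

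Next we use the classification of closed surfaces. For a connected closed surface $S$ we have $\chi(S)=2-\varepsilon(S)$, whether $S$ is orientable of genus $g$ (so $\varepsilon=2g$) or nonorientable with $k$ crosscaps (so $\varepsilon=k$). Both $\chi$ and $\varepsilon$ are additive over components. Hence for $\Sigma$ with $c(G)$ components we get $\chi(\Sigma)=2c(G)-\varepsilon(G)$, where $\varepsilon(G)$ is the sum of the Euler genera of the components.

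Combining the two steps gives $\varepsilon(G)=2c(G)-\chi(G)=2c(G)-|V(G)|+|E(G)|-|F(G)|$, as claimed. Note that under the paper's convention $\varepsilon(G)=2c(G)-\chi(G)$, the statement is literally a rewriting of that definition together with $\chi(G)=|V(G)|-|E(G)|+|F(G)|$. So the only substantive point is consistency: this definition must agree with the topological Euler genus of the capped surface. The main step needing care is exactly that consistency, namely that capping the boundary components yields a genuine cell decomposition. This holds because every vertex-disc and every edge-ribbon is an open disc in the interior. Every boundary component bounds exactly one capping disc.
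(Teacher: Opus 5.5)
Your argument is correct, but it does more than the paper does. The paper gives no proof: it quotes the lemma from the literature. With the paper's own conventions from the preliminaries, $\chi(G)=|V(G)|-|E(G)|+|F(G)|$ and $\varepsilon(G)=2c(G)-\chi(G)$, the statement is a one-line substitution, as you note in your last paragraph.

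Your capping argument is a genuinely different and more substantive route. It checks that this combinatorially defined $\varepsilon(G)$ agrees with the topological Euler genus of the closed surface obtained by gluing discs into the boundary components. The paper tacitly relies on that fact when it identifies bouquets of Euler genus $2$ as toroidal or Klein bottle bouquets, so your version buys that consistency; the paper's version buys nothing beyond the definitions.

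One point of wording should be fixed. Vertex-discs and edge-ribbons are not $0$- and $1$-cells. The reason you give at the end, that each is a disc in the interior, is also not what is needed. You can repair this in either of two ways:
\begin{itemize}
\item Describe the capped surface as a handle decomposition with $|V(G)|$ $0$-handles, $|E(G)|$ $1$-handles and $|F(G)|$ $2$-handles, which gives $\chi(\Sigma)=|V(G)|-|E(G)|+|F(G)|$ directly.
\item Note that $G$ deformation retracts onto its core graph, so its Euler characteristic is $|V(G)|-|E(G)|$. Gluing $|F(G)|$ discs along circles then adds $|F(G)|$ by Mayer--Vietoris.
\end{itemize}
If you want a genuine cellular embedding instead, the $2$-cells are the capping discs together with their collars. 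The thing to verify is that each of these is an open disc.
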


\vspace{-0.2em}
For the specialized case of a bouquet, this formula simplifies considerably.
\vspace{-0.2em}
\begin{lem}\emph{(\cite{Yanqi_Intersection})}\label{varepsilon}
If $B_n$ is a bouquet, then $\varepsilon(B_n)=1+|E(B_n )|-|F(B_n)|$.
\end{lem}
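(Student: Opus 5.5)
This is the specialization of Lemma \ref{varepsilon_ribbon} to a bouquet, so the plan is to substitute the bouquet's values of $c$ and $|V|$ into that formula.

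First, fix the two quantities that a single vertex-disc determines.
\begin{itemize}
\item A bouquet $B_n$ has exactly one vertex-disc, so $|V(B_n)|=1$.
\item Every edge-ribbon of $B_n$ is a loop attached to that disc. The union of the disc with all the ribbons is therefore a connected surface, so $c(B_n)=1$.
\end{itemize}
This holds whether or not $E(B_n)$ is empty; a single disc is connected.

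Substituting into Lemma \ref{varepsilon_ribbon} gives
\[
\varepsilon(B_n)=2\cdot 1-1+|E(B_n)|-|F(B_n)|=1+|E(B_n)|-|F(B_n)|,
\]
which is the claimed formula.

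Alternatively, one can start from the definitions $\chi(G)=|V(G)|-|E(G)|+|F(G)|$ and $\varepsilon(G)=2c(G)-\chi(G)$ directly. The same two substitutions give the same line of algebra.

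There is no real obstacle here. The only point needing a sentence of justification is $c(B_n)=1$, and it follows immediately because every edge-ribbon meets the unique vertex-disc.
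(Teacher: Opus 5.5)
Your proposal is correct and takes the same approach as the paper. The paper cites this lemma without proof and presents it as the specialization of Lemma~\ref{varepsilon_ribbon} to a bouquet, which amounts to substituting $c(B_n)=|V(B_n)|=1$, exactly as you do.
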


\vspace{-0.2em}
The next lemma demonstrates a crucial connection between the partial Petrial polynomial and the intersection graph of a bouquet.
\vspace{-0.2em}
\begin{lem}\emph{\cite{Matroids_Yan}}\label{Petrial_orientable}
If bouquets $B_n^{\prime}$ and $B_n^{\prime\prime}$ have the same intersection graph, then $^{\partial}{\varepsilon^{\times}_{B_n^{\prime}}}(z)=^{\partial}{\varepsilon^{\times}_{B_n^{\prime\prime}}}(z)$.
\end{lem}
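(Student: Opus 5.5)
The plan is to show that the multiset of Euler genera $\{\varepsilon(B^{\times|A}) : A\subseteq E(B)\}$ depends only on $I(B)$. I will work through the standard linear-algebraic description of face counts of a bouquet. By Lemma \ref{varepsilon}, $\varepsilon(B_n)=1+n-|F(B_n)|$. I would first invoke (or reprove) the known fact that for a bouquet the number of faces is governed by the rank over $\mathbb{F}_2$ of the adjacency matrix of the signed intersection graph:
\[
|F(B_n)| = n+1-\operatorname{rank}_{\mathbb{F}_2}\operatorname{adj}(SI(B_n)),
\]
so that $\varepsilon(B_n)=\operatorname{rank}_{\mathbb{F}_2}\operatorname{adj}(SI(B_n))$. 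If this is not already available, I would prove it by induction on $n$. Deleting or sliding loops changes the boundary components in a way mirrored by elementary row and column operations on the symmetric matrix: handle slides correspond to congruence $M\mapsto PMP^{T}$. Reduction then leads to a normal form of trivial twisted loops, trivial untwisted loops, and interlaced untwisted pairs, and for each of these the face count is checked directly. I expect this step to be the main obstacle, since the correspondence between handle slides and congruence must be verified carefully, including the signs.

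Next I would determine the effect of a partial Petrial on the signed intersection graph. Adding a half-twist to one end of a loop $e$ flips its sign and keeps its cyclic position. Interlacement is determined only by the cyclic order of the half-edges, not by their signs, so $I(B^{\times|A})=I(B)$ and $SI(B^{\times|A})$ is $SI(B)$ with the signs of the vertices in $A$ toggled. In matrix form,
\[
\operatorname{adj}(SI(B^{\times|A}))=\operatorname{adj}(SI(B))+\bm{D}_{n,A}\pmod 2 .
\]

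Finally, write $\operatorname{adj}(SI(B))=\operatorname{adj}(I(B))+\bm{D}_{n,T}$, where $T$ is the set of twisted loops. Then
\[
{}^{\partial}\varepsilon^{\times}_{B}(z)=\sum_{A\subseteq\mathbb{Z}_n} z^{\operatorname{rank}(\operatorname{adj}(I(B))+\bm{D}_{n,A\triangle T})}.
\]
The map $A\mapsto A\triangle T$ is a bijection of the power set of $\mathbb{Z}_n$, so the sum equals $\sum_{S\subseteq\mathbb{Z}_n} z^{\operatorname{rank}(\operatorname{adj}(I(B))+\bm{D}_{n,S})}$. This expression depends only on $I(B)$. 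Applying it to $B_n'$ and $B_n''$ gives equal polynomials. Note that a fixed labeling of the loops is implicit: if the two intersection graphs are only isomorphic, relabeling by the isomorphism permutes rows and columns simultaneously, which preserves rank, so the conclusion still holds.
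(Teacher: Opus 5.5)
Your proposal is correct. The paper itself only imports this lemma from \cite{Matroids_Yan} without proof, but your argument is exactly the one the paper gives for Lemma~\ref{rankofpoly_Petrial}, from which the statement follows at once: equate $\varepsilon$ with the $\mathbb{GF}(2)$-rank of $\operatorname{adj}(SI(\cdot))$ via Lemmas~\ref{varepsilon} and~\ref{rank=1+e-f}, observe that a partial Petrial fixes $I(B_n)$ and toggles the diagonal entries indexed by $A$, and reindex the sum.

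The only differences are cosmetic. Your bijection $A\mapsto A\triangle T$ replaces the paper's appeal to Lemma~\ref{G,GA} and its reduction to an orientable base bouquet. Also, since Lemma~\ref{rank=1+e-f} is already available, you do not need the handle-slide reproof you flagged as the main obstacle.
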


\vspace{-0.2em}
The following lemma shows that studying the partial Petrial polynomial of a ribbon graph is equivalent to studying the polynomial of any of its partial Petrial graphs.
\vspace{-0.2em}
\begin{lem}\emph{\cite{Petrial_Yan}}\label{G,GA}
Let $G$ be a ribbon graph and $A\subseteq E(G)$. Then  $^{\partial}{\varepsilon^{\times}_{G}}(z)=^{\partial}{\varepsilon^{\times}_{G^{\times|A}}}(z)$.
\end{lem}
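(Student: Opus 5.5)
The plan is a direct reindexing argument. I will show that partial Petriality composes via symmetric difference, $(G^{\times|A})^{\times|B}=G^{\times|A\Delta B}$ for all $A,B\subseteq E(G)$. Then the map $B\mapsto A\Delta B$ is a bijection of the power set of $E(G)=E(G^{\times|A})$, and the two generating functions in Definition~\ref{poly_Petrial} coincide term by term.

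\textbf{Step 1: the operation acts edge by edge.} Applying the Petrial to an edge $e$ only changes how the ribbon $e$ is reattached to its vertex-disc. It leaves all other vertex-discs and edge-ribbons, and their attaching segments, untouched. So the partial Petrial with respect to $A$ is the composition of the single-edge Petrials $\tau_e$ for $e\in A$, and these commute because they act on disjoint ribbons. Here I also note that $G^{\times|A}$ has the same edge set as $G$, so the summation ranges in the two polynomials are the same set.

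\textbf{Step 2: each single-edge Petrial is an involution.} Applying $\tau_e$ twice detaches one end of $e$ twice and adds two half-twists, i.e.\ a full twist. A full twist of a ribbon is undone by a homeomorphism of the ribbon fixing its two attaching segments. Hence $\tau_e\tau_e(G)$ is equivalent to $G$ as a ribbon graph. Equivalently, in the signed rotation $\tau_e$ flips the sign of one half-edge of $e$, and doing this twice restores the original signed rotation.

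I expect this step to be the only point needing care. One must check that "full twist equals no twist" holds up to the equivalence of ribbon graphs used in the paper, and that it does not depend on which end of $e$ is detached. Choosing a different end gives the same result, since a half-twist at either end yields homeomorphic ribbon graphs. The signed-rotation viewpoint makes both facts transparent, so I would argue through it.

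\textbf{Step 3: reindex the sum.} By Steps 1 and 2, $(G^{\times|A})^{\times|B}=\prod_{e\in A}\tau_e\prod_{e\in B}\tau_e(G)=\prod_{e\in A\Delta B}\tau_e(G)=G^{\times|A\Delta B}$. Euler genus is an invariant of the ribbon graph, so this gives
\[
{}^{\partial}\varepsilon^{\times}_{G^{\times|A}}(z)=\sum_{B\subseteq E(G)}z^{\varepsilon(G^{\times|A\Delta B})}=\sum_{C\subseteq E(G)}z^{\varepsilon(G^{\times|C})}={}^{\partial}\varepsilon^{\times}_{G}(z).
\]
The middle equality uses that $B\mapsto A\Delta B$ is a bijection (indeed an involution) on the subsets of $E(G)$.
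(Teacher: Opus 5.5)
Your proof is correct. The key identity $(G^{\times|A})^{\times|B}=G^{\times|A\Delta B}$ holds for two reasons: Petrials of distinct edges commute, and each single-edge Petrial is an involution, since reversing the attachment of one end twice is literally the identity on the abstract surface. This makes $B\mapsto A\Delta B$ a bijection between the index sets of the two sums.

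The paper gives no proof of its own for this lemma; it is simply quoted from Yan and Li. Your reindexing argument is the standard proof of it, so there is nothing to add.
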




\vspace{-0.2em}
A key property of the partial Petrial polynomial is that it is interpolating, meaning its coefficients are non-zero for all integers between the minimum and maximum degrees.
\vspace{-0.2em}
\begin{lem}\emph{(\cite{2021_II})}\label{interpolation of Petrial} 
For any ribbon graph $G$, the partial Petrial polynomial $^{\partial}{\varepsilon^{\times}_{G}}(z)$ is interpolating.
\end{lem}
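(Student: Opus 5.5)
The plan is a discrete intermediate value argument on the Boolean lattice of subsets $A\subseteq E(G)$. The key fact to establish is that changing the Petrial status of a single edge changes the Euler genus by at most one:
\[
\bigl|\varepsilon(G^{\times|A})-\varepsilon(G^{\times|A\triangle\{e\}})\bigr|\le 1 \quad\text{for all } A\subseteq E(G),\ e\in E(G).
\]
Granting this, the conclusion is immediate. Let $A_0$ attain the minimum Euler genus $s$ and $A_1$ the maximum $t$. Walk from $A_0$ to $A_1$ by toggling the edges of $A_0\triangle A_1$ one at a time. This gives a sequence of subsets in which consecutive Euler genera differ by at most $1$. An integer sequence with steps of size at most one that starts at $s$ and ends at $t$ takes every integer value in $[s,t]$. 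Hence every such value lies in $supp$, which is exactly the interpolating property.

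To prove the key fact, I would first observe that toggling $e$ preserves the vertex-discs, the edge set and the connected components. Indeed, $G^{\times|A\triangle\{e\}}=(G^{\times|A})^{\times|\{e\}}$ has the same underlying abstract graph. By Lemma \ref{varepsilon_ribbon}, it therefore suffices to show that the face numbers of $H:=G^{\times|A}$ and $H^{\times|\{e\}}$ differ by at most one.

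For this, let $H'=H-e$ and compare both $H$ and $H^{\times|\{e\}}$ with $H'$. Each of them is obtained from $H'$ by attaching a ribbon along two arcs on the boundary of $H'$, once untwisted relative to $H$ and once with an extra half-twist. There are two cases.
\begin{itemize}
\item If the two attaching arcs lie on different boundary components of $H'$, then attaching a ribbon in either manner merges those two components. Both graphs then have $|F(H')|-1$ faces.
\item If the two arcs lie on the same boundary component, one of the two attachments splits that component into two and the other leaves a single component. The face numbers are then $|F(H')|+1$ and $|F(H')|$ in some order.
\end{itemize}
In both cases the face counts of $H$ and $H^{\times|\{e\}}$ differ by at most $1$.

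The main obstacle is the claim in the same-component case that the two twistings behave oppositely, one splitting and one not. I would verify it by tracing the boundary curve through the ribbon: adding a half-twist to one end reverses the direction in which one of the two traversed boundary segments is re-entered. This exchanges the reconnection pattern between ``two closed curves'' and ``one closed curve.'' In any case only the bound $|\Delta F|\le 1$ is needed, and this already follows from the weaker observation that each of the two face counts lies in $\{|F(H')|-1,\,|F(H')|,\,|F(H')|+1\}$ and both equal $|F(H')|-1$ in the different-component case.
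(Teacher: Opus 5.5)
The paper gives no proof of this lemma; it is quoted from Gross, Mansour and Tucker. Your argument is correct and is the standard route: a single-edge Petrial changes the face count, and hence the Euler genus, by at most one, and walking through the Boolean lattice from a genus-minimizing subset to a genus-maximizing one then hits every intermediate value.

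One small tightening is needed in your last paragraph. Knowing only that both face counts lie in $\{|F(H')|-1,|F(H')|,|F(H')|+1\}$ does not give $|\Delta F|\le 1$ in the same-component case, since it would allow the counts $|F(H')|-1$ and $|F(H')|+1$. What you actually need is this: when both arcs lie on one boundary curve, the two remnants of that curve and the two long sides of the ribbon close up into one or two curves. So in that case both counts lie in $\{|F(H')|,|F(H')|+1\}$. Your endpoint-pairing argument establishes exactly this, and in fact gives the sharper statement that the two counts are these two values in some order.
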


\vspace{-0.2em}
We also rely on the result for the highest degree of the partial Petrial polynomial.
\vspace{-0.2em}
\begin{lem}\emph{(\cite{2021_II})}\label{highest-term of Petrial} 
If $G$ is connected, 
then the highest degree of $^{\partial}{\varepsilon^{\times}_{G}}(z)$ is $|E(G)|-|V(G)| + 1$.
\end{lem}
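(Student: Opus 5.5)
The statement to prove is Lemma~\ref{highest-term of Petrial}: if $G$ is connected, the top degree of $^{\partial}{\varepsilon^{\times}_{G}}(z)$ is $|E(G)|-|V(G)|+1$. The argument has two halves, an upper bound valid for every $A$ and a choice of $A$ that attains it.

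For the upper bound, fix $A\subseteq E(G)$. Partial Petriality changes neither the vertex set, the edge set, nor connectivity, since it only re-twists edge-ribbons. So $c(G^{\times|A})=1$, and Lemma~\ref{varepsilon_ribbon} gives
\[\varepsilon(G^{\times|A})=2-|V(G)|+|E(G)|-|F(G^{\times|A})|.\]
Every nonempty ribbon graph has at least one boundary component, so $|F(G^{\times|A})|\ge 1$. Hence every exponent in the polynomial is at most $|E(G)|-|V(G)|+1$.

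For attainment, it is enough to find one $A$ with $|F(G^{\times|A})|=1$. I will do this by contraction to a bouquet.

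\begin{itemize}
\item Choose a spanning tree $T$ of $G$. By Lemma~\ref{G,GA} we may first replace $G$ by $G^{\times|A_0}$ with $A_0\subseteq E(T)$ chosen so that every tree edge is untwisted. Petrialing a tree edge just toggles whether it is twisted, so such an $A_0$ exists.
\item Contract the untwisted tree edges one at a time. Each contraction of a non-loop untwisted edge preserves the number of boundary components and the twist status of the other edges, and it commutes with Petrialing edges off $T$.
\item The result is a bouquet $B$ whose loops are exactly the $m=|E(G)|-|V(G)|+1$ non-tree edges. For $A\subseteq E(G)\setminus E(T)$ we get $|F(G^{\times|A})|=|F(B^{\times|A})|$.
\end{itemize}

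It remains to find $A$ with $|F(B^{\times|A})|=1$. I will do this by induction on the number of loops, processing loops one by one. The key local fact is this: when a loop $e$ is added to a bouquet $B'$, both ends of $e$ lie on boundary components of $B'$. If both ends lie on the same boundary component, then exactly one of the two choices (twisted or untwisted) keeps the number of faces unchanged, while the other splits the face. If the ends lie on two different components, either choice merges them.

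Inductively, suppose $B'$ has one face. Adding $e$ with the appropriate twist then keeps one face. Encoding that twist choice as membership of $e$ in $A$ completes the induction. Equivalently, in the adjacency-matrix language, one chooses the diagonal so that the $\mathbb{Z}_2$ matrix $\mathrm{adj}(SI(B))$ attains the corank that gives a single face.

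Combining the two halves, the exponent $m$ occurs and no larger exponent does. This proves Lemma~\ref{highest-term of Petrial}.

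The main obstacle is the local face-count fact for adding a loop whose ends lie on the same boundary component. To establish it, I will trace the boundary walk: cutting the single face at the two attachment points into arcs $\alpha$ and $\beta$, the untwisted ribbon yields the two faces $\alpha$ and $\beta$, while the twisted ribbon yields the single face $\alpha\bar\beta$.
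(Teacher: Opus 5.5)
The paper does not prove this lemma; it quotes it from Gross, Mansour and Tucker \cite{2021_II}, so there is no internal argument to compare against. Your proof is correct and self-contained, and it takes its own route.

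\textbf{What your argument uses.} The upper bound is Lemma~\ref{varepsilon_ribbon} with $c=1$ and $|F|\ge 1$. Attainment applies, greedily, the same face-count dichotomy that drives the proof of Lemma~\ref{(1+z)*}: if both ends of a new edge lie on one face, one twist choice keeps the face count and the other raises it by one.

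\textbf{Steps you can drop, and what the bouquet reduction buys.} The untwisting of $T$ and the contraction to a bouquet are harmless but not needed. Contracting a non-loop edge never changes the underlying surface, since $v_1\cup e\cup v_2$ is a disc whatever the twisting. So you could run the same induction directly on $G$: start from the ribbon subgraph $T$, which is a disc with one boundary component, and add the $|E(G)|-|V(G)|+1$ cotree edges one at a time. What the bouquet reduction does buy is contact with the paper's matrix viewpoint (Lemmas~\ref{rank=1+e-f} and \ref{rankofpoly_Petrial}; the latter's proof never uses primeness). There, attainment becomes an algebraic fact: if $M$ is symmetric and nonsingular over $\mathbb{Z}_2$, then $\det\begin{pmatrix}M&v\\ v^{T}&d\end{pmatrix}=\det M\,(d+v^{T}M^{-1}v)$. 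So the diagonal entry $d$ of each new loop can be chosen to keep the matrix at full rank.

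\textbf{A correction to your final paragraph.} It is not true in general that the untwisted ribbon splits the face and the twisted one does not. Which choice splits depends on whether the boundary walk traverses the two attachment segments in the same or in opposite directions relative to the vertex orientation. For example, let $B'$ be a single twisted loop $f$ and let $e$ interlace $f$. By Lemma~\ref{rank=1+e-f}, an untwisted $e$ gives $\mathrm{rank}\begin{pmatrix}1&1\\ 1&0\end{pmatrix}=2$, hence one face. A twisted $e$ gives rank $1$, hence two faces. Your induction only uses that exactly one of the two choices preserves the face count, so the proof stands. Still, that justification should be phrased via traversal directions rather than via twisted versus untwisted.
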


\vspace{-0.2em}
The rank of a matrix $A$ over $\mathbb{GF}(2)$ is denoted $\text{rank}(A)_{\mathbb{Z}_{2}}$. Finally, we use a known result that relates the rank of this matrix to the number of edges and faces of the bouquet.
\vspace{-0.2em}
\begin{lem}\emph{\cite{weight}}\label{rank=1+e-f}
For a bouquet $B_n$, then $\text{rank}(\text{adj}(SI(B_n)))_{\mathbb{Z}_2}=|E(B_n)|-|F(B_n)|+1$.
\end{lem}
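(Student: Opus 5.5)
Lemma \ref{rank=1+e-f} is quoted from \cite{weight}, but a self-contained argument is short, so I would prove it directly by an induction on $n=|E(B_n)|$ that tracks the face count and the $\mathbb{GF}(2)$-rank together. To recast the claim, write $M=\text{adj}(SI(B_n))$ over $\mathbb{GF}(2)$. The identity is equivalent to $|F(B_n)|=n+1-\text{rank}(M)$, or, by Lemma \ref{varepsilon}, to $\varepsilon(B_n)=\text{rank}(M)$. In the base case $n=0$ the bouquet is a disc with one face and $M$ is empty, so both sides equal $0$.

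I would not delete an arbitrary loop $e$, because the change in face count then depends on whether the two ends of $e$ lie on the same boundary component of $B_n-e$, and that is awkward to match with a rank change. Instead I would use the standard reduction on signed rotations.

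\textbf{Case 1: some loop $e$ is trivial and untwisted.} Its row and column in $M$ are zero. Deleting $e$ lowers $|E|$ by $1$ and $|F|$ by $1$, since $e$ bounds a face by itself and its removal merges that face with the adjacent one. The rank is unchanged, so the identity passes from $B_n-e$ to $B_n$. If $e$ is trivial and twisted instead, its row is $e_e$ and the rank drops by exactly $1$, while $|F|$ is unchanged (a trivial Möbius band merges nothing). Again the identity follows.

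\textbf{Case 2: every loop is nontrivial.} Here I would use a local move that changes $B_n$ into a bouquet with the same face count and a rank-equivalent matrix, ending in Case 1. Concretely, pick an untwisted loop $e$ interlaced with some loop $f$. Sliding $f$ over $e$ (a handle slide, i.e.\ an isotopy of the surface) preserves the surface, and so preserves $|F|$ and $n$. On the signed intersection graph it acts as the GF(2) row/column operation $r_f\leftarrow r_f+r_e$, $c_f\leftarrow c_f+c_e$. That operation is a congruence $M\mapsto P^TMP$, so the rank is preserved. For a twisted loop $e$, sliding another loop over it similarly adds $r_e$ with the diagonal bookkeeping handled automatically, since $M_{ee}=1$.

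Repeated slides perform Gaussian elimination by symmetric congruence until the matrix is a direct sum of $1\times1$ blocks $(1)$, $2\times2$ blocks $\begin{pmatrix}0&1\\1&0\end{pmatrix}$, and zeros. These correspond respectively to crosscaps, tori and trivial untwisted loops. For that normal form the count $\varepsilon=\text{rank}$ is checked directly: a torus block contributes genus $2$ and rank $2$, and a crosscap contributes $1$ and $1$.

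The main obstacle is verifying that the handle slide really induces exactly the congruence $r_f\leftarrow r_f+r_e$ on the signed intersection matrix, including the diagonal entry. The new diagonal is $M_{ff}+M_{ee}+2M_{ef}\equiv M_{ff}+M_{ee}$, which matches the twistedness of the slid loop. I would check this by tracking the signed rotation word: the interlacement of the slid $f$ with any $g$ flips precisely when $g$ interlaces $e$.
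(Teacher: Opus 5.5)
The paper gives no proof of this lemma; it is imported from \cite{weight}. Your handle-slide argument is therefore an independent route, and its skeleton is sound. A slide of $f$ across $e$ re-attaches $f$ along $\partial(B_n\setminus f)$ and yields a homeomorphic surface. (It is not an isotopy of the surface itself, but that is enough.) So $n$ and $|F|$ are preserved. Your check that the slide induces the congruence $r_f\leftarrow r_f+r_e$, $c_f\leftarrow c_f+c_e$ is correct, including the diagonal $M_{ff}+M_{ee}$ and the flip of the $(f,e)$ entry when $e$ is twisted. Case 1 is also correct, but not for the reason you give. A trivial loop with rotation $e\,W_1\,e\,W_2$, where $W_1,W_2\neq\emptyset$, does not bound a face by itself. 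Argue instead that $B_n$ is obtained by joining $W_1$ and $W_2$ into the two gaps of the one-loop bouquet $e$, and use $|F(G_1\vee G_2)|=|F(G_1)|+|F(G_2)|-1$.

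The genuine gap is in Case 2. A single slide of $f$ across $e$ exists only when an end of $f$ sits immediately next to an end of $e$ along $\partial(B_n\setminus f)$. Otherwise the end of $f$ must travel along the boundary and cross other ribbons $g$, and each crossing adds $r_g$ as well. So you cannot pick an arbitrary interlaced pair as pivot. The sentence ``repeated slides perform Gaussian elimination by symmetric congruence'' is exactly what needs proof, and as written it is only asserted. Even granting a block-diagonal matrix, your ``direct check'' would also need the fact that a bouquet with disconnected intersection graph is a join of its pieces.

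The fix is an explicit normalization in which every slide crosses an adjacent end.
\begin{itemize}
\item If some loop $e$ is twisted, with rotation $e\,W\,e\,Z$, repeatedly slide the half-edge just after the first end of $e$ across $e$. It lands just after the second end. After $|W|$ slides the ends of $e$ are consecutive, so your Case 1 applies.
\item If all loops are untwisted and $e,f$ interlace, with rotation $e\,A\,f\,B\,e\,C\,f\,D$, slide the half-edges of $A$ one by one across $e$ (just after its first end $\to$ just before its second end). Then slide those of $BA$ across $f$, and then those of $CBA$ across $e$ (just after its second end $\to$ just before its first end). This yields $D\,C\,B\,A\,e\,f\,e\,f$. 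That is the join of a one-face torus bouquet with a bouquet on $n-2$ loops, so both $|E|-|F|+1$ and the rank drop by exactly $2$, and the induction closes.
\end{itemize}

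Alternatively, you can bypass the combinatorics. $\text{adj}(SI(B_n))$ is the Gram matrix of the mod-$2$ intersection form on $H_1(B_n;\mathbb{Z}_2)\cong\mathbb{Z}_2^{n}$ in the basis of loop cores. By Lefschetz duality its radical is the image of $H_1(\partial B_n;\mathbb{Z}_2)$, which has dimension $|F(B_n)|-1$.
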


\vspace{-0.8em}
\begin{lem}\label{rankofK_n}
Let $A=\bm{J}_{n,n}-\bm{I}_{n,n}$ be the $n\times n$ matrix with $n\geq 2$, then 
	\vspace{-0.3em}
	\[\text{rank}{(A)}_{\mathbb{Z}_2}=
	\begin{cases} 
		n, & \text{if } n \text{ is even}, \\ 
		n-1, & \text{if } n \text{ is odd}.
	\end{cases}\]
\end{lem}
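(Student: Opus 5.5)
We determine the rank of $A=\bm{J}_{n,n}-\bm{I}_{n,n}$ over $\mathbb{GF}(2)$ by computing $A^2$ and reading off invertibility, then treating the odd case via an explicit kernel vector. Over $\mathbb{GF}(2)$ we have $-\bm{I}_{n,n}=\bm{I}_{n,n}$, so $A=\bm{J}_{n,n}+\bm{I}_{n,n}$. Since $\bm{J}_{n,n}^2=n\bm{J}_{n,n}$, expanding gives
\[
A^2=\bm{J}_{n,n}^2+2\bm{J}_{n,n}+\bm{I}_{n,n}=n\bm{J}_{n,n}+\bm{I}_{n,n}
\]
over $\mathbb{GF}(2)$.

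\emph{Case $n$ even.} Here $n\bm{J}_{n,n}=\bm{O}_{n,n}$, so $A^2=\bm{I}_{n,n}$. Hence $A$ is invertible (it is its own inverse), and $\text{rank}(A)_{\mathbb{Z}_2}=n$.

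\emph{Case $n$ odd.} First, the all-ones vector $\mathbf{1}$ lies in the kernel. Each row of $A$ has exactly $n-1$ ones, and $n-1$ is even, so $A\mathbf{1}=\mathbf{0}$. Therefore $\text{rank}(A)_{\mathbb{Z}_2}\le n-1$.

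For the reverse inequality, delete the last row and column of $A$. The remaining principal $(n-1)\times(n-1)$ submatrix is $\bm{J}_{n-1,n-1}-\bm{I}_{n-1,n-1}$. Since $n-1$ is even and $n-1\geq 2$ (because $n\ge 3$ when $n$ is odd and $n\ge2$), the even case applies to it, so it is invertible. Thus $\text{rank}(A)_{\mathbb{Z}_2}\ge n-1$, and equality holds.

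This argument has no real obstacle. The only point requiring care is to perform every computation, including $-1=1$ and $2\bm{J}_{n,n}=\bm{O}_{n,n}$, over $\mathbb{GF}(2)$.
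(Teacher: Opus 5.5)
Your proof is correct, but it takes a different route from the paper. The paper uses the integer determinant $\det(\bm{J}_{n,n}-\bm{I}_{n,n})=\pm(n-1)$ and reduces it mod $2$. (The eigenvalues are $n-1$ once and $-1$ with multiplicity $n-1$. The paper writes the sign as $(-1)^n$ where it should be $(-1)^{n-1}$, which is immaterial mod $2$.) That settles the even case at once. In the odd case, however, it only shows that $A$ is singular over $\mathbb{GF}(2)$, i.e.\ $\text{rank}(A)_{\mathbb{Z}_2}\le n-1$; the paper never justifies the matching lower bound.

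You instead work over $\mathbb{GF}(2)$ throughout. The identity $A^2=n\bm{J}_{n,n}+\bm{I}_{n,n}$ shows that $A$ is an involution when $n$ is even, so you also get the explicit inverse $A^{-1}=A$. In the odd case you supply both bounds: the kernel vector $\mathbf{1}$ gives the upper one, and the invertible even-size principal submatrix $\bm{J}_{n-1,n-1}-\bm{I}_{n-1,n-1}$ gives the lower one.

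The determinant route is shorter if one takes the eigenvalue computation for granted. Yours is self-contained and actually complete in the odd case, and your principal-submatrix step is exactly what is needed to close the paper's argument as well.
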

\begin{proof}
	The determinant of $A$ over the integers is given by $\det(A)=(-1)^n(n-1)$.~Thus one has that the determinant of $A$ over $\mathbb{Z}_2$ is $0$ if $n$ is odd and $1$ otherwise. The result holds.
\end{proof}



\section{Partial Petrial Polynomial for Cycles}

In this section, we calculate the partial Petrial polynomial of bouquets whose intersection graphs are cycles. First, we introduce two key lemmas.
\vspace{-0.6em}
\begin{lem}\label{(1+z)*}
Let $G_n$ be a ribbon graph with $n$ edges. If the ribbon graph $G_{n+1}$ is obtained from $G_n$ by embedding a new edge $e$ whose two endpoints lie in the same face of $G_n^{\times |A}$ for any $A\subseteq E(G_n)$, then the partial Petrial polynomial of $G_n$ satisfies the recurrence relation
\vspace{-0.8em}
\[^{\partial}{\varepsilon_{G_{n+1}}^{\times}}(z)=(1+z)^{\partial}{\varepsilon_{G_n}^{\times}}(z).\]
\end{lem}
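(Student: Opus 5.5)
Split the sum defining $^{\partial}\varepsilon^{\times}_{G_{n+1}}(z)$ according to whether the new edge $e$ is Petrialed. Every subset of $E(G_{n+1})=E(G_n)\cup\{e\}$ is either $A$ or $A\cup\{e\}$ with $A\subseteq E(G_n)$. Hence it suffices to prove the following: for each $A\subseteq E(G_n)$, one of $G_{n+1}^{\times|A}$ and $G_{n+1}^{\times|A\cup\{e\}}$ has Euler genus $\varepsilon(G_n^{\times|A})$, and the other has Euler genus $\varepsilon(G_n^{\times|A})+1$. Summing $z^{\varepsilon(G_n^{\times|A})}(1+z)$ over all $A$ then gives the stated recurrence.

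To prove this, fix $A$ and set $H=G_n^{\times|A}$. Since Petrial acts edge by edge, $G_{n+1}^{\times|A}$ is $H$ with $e$ attached (in some twisting state), and $G_{n+1}^{\times|A\cup\{e\}}$ is the same graph with $e$ given an extra half-twist. The hypothesis says both ends of $e$ lie on a single boundary component $f$ of $H$. The effect of attaching a ribbon with both ends on one face is classical. Traversing $f$, the two attaching arcs split $f$ into two boundary paths. If $e$ is attached so that its orientation is compatible with the local orientation along $f$, the face splits into two faces, so $|F|$ increases by $1$. With the opposite half-twist, the two paths are joined through the ribbon into a single face, so $|F|$ is unchanged. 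Exactly one of the two twisting states of $e$ gives each outcome.

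Next, compute the genus using Lemma \ref{varepsilon_ribbon}. Adding $e$ does not change $|V|$ or $c$, and it increases $|E|$ by $1$. So $\varepsilon$ is unchanged when $|F|$ goes up by $1$, and increases by $1$ when $|F|$ is unchanged. This gives the claimed dichotomy, and hence $z^{\varepsilon(G_{n+1}^{\times|A})}+z^{\varepsilon(G_{n+1}^{\times|A\cup\{e\}})}=(1+z)z^{\varepsilon(H)}$.

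The main obstacle is justifying carefully that toggling the half-twist of $e$ exactly switches between the "split" and "merge" outcomes. The rest of $H$ is unaffected by the Petrial of $e$: the boundary of $H$ away from the two attaching arcs is identical in both graphs. So I would argue locally, tracing the boundary walk of $f$ through the two arcs. One twisting reconnects the two arcs of $f$ into two closed curves, and the other into one. A half-twist on $e$ swaps which ends of its two long sides are joined, so the two states give different face counts. Both ends lying on the same face guarantees that no other face is involved; this is where the hypothesis is needed.
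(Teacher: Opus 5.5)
Your proof is correct and follows the same route as the paper: you pair each $A\subseteq E(G_n)$ with $A\cup\{e\}$, show that one of the two partial Petrials gains a face while the other keeps the same number of faces, convert this to Euler genus via Lemma~\ref{varepsilon_ribbon}, and sum. Your local boundary-tracing argument, showing that toggling the half-twist of $e$ swaps the split and merge outcomes, supplies a justification that the paper only asserts.
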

\vspace{-1.5em}
\begin{proof}
For any $A \subseteq E(G_n)$, 
since the two endpoints of $e$ are always contained in a single face of $G_n^{\times|A}$, embedding the edge $e$ in $G_n$ yields either
\vspace{-0.3em}
\[
\begin{cases}
	|F(G_{n+1}^{\times|(A \cup \{e\})})| = |F(G_n^{\times|A})|+1, \\
	|F(G_{n+1}^{\times|A})| = |F(G_n^{\times|A})|,
\end{cases}
\quad \text{or} \quad\quad
\begin{cases}
	|F(G_{n+1}^{\times|(A \cup \{e\})})| = |F(G_n^{\times|A})|, \\
	|F(G_{n+1}^{\times|A})| = |F(G_n^{\times|A})|+1,
\end{cases}
\]

\vspace{-0.3em}
\noindent which implies that either
\vspace{-0.3em}
\[
\begin{cases}
\varepsilon(G_{n+1}^{\times|(A \cup \{e\})}) = \varepsilon(G_n^{\times|A}), \\
\varepsilon(G_{n+1}^{\times|A}) = \varepsilon(G_n^{\times|A})+1,
\end{cases}
\quad \text{or} \quad\quad
\begin{cases}
\varepsilon(G_{n+1}^{\times|(A \cup \{e\})}) = \varepsilon(G_n^{\times|A})+1, \\
\varepsilon(G_{n+1}^{\times|A}) = \varepsilon(G_n^{\times|A}),
\end{cases}
\]

\vspace{-0.3em}
\noindent by Lemma \ref{varepsilon_ribbon}. Thus,
\vspace{-0.3em}
\[^\partial\varepsilon^{\times}_{G_{n+1}}(z)=\sum_{A \subseteq E(G_n)}(z^{\varepsilon(G_{n+1}^{\times|(A\cup \{e\})})} + z^{\varepsilon(G_{n+1} ^{\times|A})})=
\sum_{A \subseteq E(G_n)}(z^{\varepsilon(G_n^{\times|A})} + z^{\varepsilon(G_n ^{\times|A})+1})
= \sum_{A \subseteq E(G_n)} (1+z)~ z^{\varepsilon(G_n^{\times|A})}.\]

\vspace{-0.3em}
\noindent Hence,
\vspace{-0.3em}
\[
^\partial \varepsilon^{\times}_{G_{n+1}}(z) \;=\; (1+z)\,^\partial \varepsilon^{\times}_{G_n}(z),
\]

\vspace{-0.6em}
\noindent as required.
\end{proof}

\vspace{-1.2em}
\begin{lem}\emph{\cite{Petrial_Yan}}\label{faceofpath}
If $B_n$ is a bouquet with $n$ edges such that $I(B_n)=P_n$, then $|F(B^{\times|A})|=1$ or $2$ for any $A\subseteq E(B_n)$.
\end{lem}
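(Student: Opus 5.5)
By Lemma~\ref{varepsilon}, $|F(B_n^{\times|A})| = 1+n-\varepsilon(B_n^{\times|A})$. Lemma~\ref{highest-term of Petrial} says the top degree is $n$ for a bouquet, so every face count is at least $1$. It therefore suffices to show that $\varepsilon(B_n^{\times|A})\geq n-1$, i.e. $|F(B_n^{\times|A})|\le 2$, for every $A\subseteq E(B_n)$. By Lemma~\ref{rank=1+e-f} this is equivalent to
\[
\text{rank}(\text{adj}(SI(B_n^{\times|A})))_{\mathbb{Z}_2}\ \geq\ n-1 .
\]
A partial Petrial does not change interlacement; it only toggles twistedness of the loops in $A$. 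So $I(B_n^{\times|A})=P_n$ for every $A$, and the matrix in question is $M+\bm{D}$. Here $M$ is the adjacency matrix of the path $P_n$ (vertices ordered along the path) and $\bm{D}$ is some diagonal $0/1$ matrix, depending on $A$ and on the original signs. The claim thus reduces to the following linear-algebra fact: for every diagonal $\bm{D}$ over $\mathbb{GF}(2)$, $\text{rank}(M+\bm{D})_{\mathbb{Z}_2}\geq n-1$.

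To prove this, I delete the first row and the last column of $M+\bm{D}$. The remaining $(n-1)\times(n-1)$ submatrix has rows $2,\dots,n$ and columns $1,\dots,n-1$. Its $(i,i)$ entry, in the submatrix's indexing, is the $(i+1,i)$ entry of $M+\bm{D}$, which equals $1$ because consecutive path vertices are adjacent. Its entries above the diagonal are the entries $(i+1,j)$ with $j>i$. These are $(i+1,i+1)$, a diagonal entry that may be $0$ or $1$, and $(i+1,j)$ with $j\ge i+2$, which is $1$ only when $j=i+2$, i.e. at the superdiagonal position $(i,i+1)$ of the submatrix. Its entries below the diagonal are the entries $(i+1,j)$ with $j<i$, which vanish since $|i+1-j|\ge 2$.

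So the submatrix is upper triangular with all diagonal entries $1$, its determinant is $1$, and the rank is at least $n-1$. The case $n=1$ is trivial.

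An alternative combinatorial route argues by induction: adding an end loop to a path changes the face count by at most one, and the count stays bounded. The rank argument above is cleaner, so I would use it.

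The only step needing care is the bookkeeping for the submatrix entries. In particular, the unknown diagonal entries of $\bm{D}$ must land strictly above the diagonal of the chosen minor, so that they cannot affect its determinant. Once that indexing is checked, the bound $|F|\le 2$ follows immediately.
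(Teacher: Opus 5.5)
Your proof is correct, but the paper gives no proof of this statement to compare against: it imports the lemma from Yan and Li. Your reduction via Lemma \ref{varepsilon} and Lemma \ref{rank=1+e-f} to $\text{rank}(M+\bm{D})_{\mathbb{Z}_2}\ge n-1$ is valid. A partial Petrial only changes the sign of one half-edge of each loop in $A$, so the cyclic order is unchanged and $I(B_n^{\times|A})=P_n$. The minor obtained by deleting row $1$ and column $n$ is indeed unit upper triangular. The unknown diagonal entries $d_{p+1}$ land on its superdiagonal, and the remaining path entries land on the diagonal and the second superdiagonal.

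Inside the paper, the statement also follows in one line from the likewise cited Theorem \ref{P_n} together with Lemma \ref{varepsilon}. That theorem's polynomial is supported on degrees $n-1$ and $n$, so $|F|=1+n-\varepsilon\in\{1,2\}$. That derivation is legitimate only if Theorem \ref{P_n} was not itself obtained from this lemma. Your argument avoids that dependence and is self-contained apart from the face--rank formula of Lemma \ref{rank=1+e-f}. It also fits naturally with the rank reformulation the paper adopts later in Lemma \ref{rankofpoly_Petrial}. As a bonus, it shows directly that the polynomial in Theorem \ref{P_n} is supported in degrees $\{n-1,n\}$, though not its coefficients.

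A small simplification: you do not need Lemma \ref{highest-term of Petrial} for the lower bound $|F|\ge 1$. It follows at once from $\text{rank}\le n$ in Lemma \ref{rank=1+e-f}, or simply from the fact that every ribbon graph has nonempty boundary.
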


\vspace{-0.5em}
The main result in this section is stated as follows.
\vspace{-0.6em}
\begin{thm}\label{thm:main1}
Let $B_n$ be a bouquet with $n$ edges. If  
$I(B_n)=C_n$ with $n\geq 3$, then 
\vspace{-0.3em}
\[^{\partial}{\varepsilon_{B_n}^{\times}}(z) = 
\begin{cases} 
\frac{2^{n-1}-1}{3} z^{n-2} +2^{n-1}z^{n-1} + \frac{2^{n}+1}{3}  z^n, & \text{if } n \text{ is odd}, \\ 
\frac{2^{n-1}+1}{3}z^{n-2} + 2^{n-1}z^{n-1} + \frac{2^{n}-1}{3} z^n, & \text{if } n \text{ is even}. 
\end{cases}\]
\end{thm}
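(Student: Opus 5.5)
The plan is to turn the Euler genus of each partial Petrial into a matrix rank over $\mathbb{GF}(2)$, and then turn that rank into a counting problem in the group $GL(2,2)\cong S_3$.

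\textbf{Step 1: genus as a rank.} For $A\subseteq E(B_n)$, the partial Petrial $B_n^{\times|A}$ is again a bouquet with the same intersection graph $C_n$. Its twisted loops are those loops whose twist status was flipped by $A$. By Lemma \ref{varepsilon} and Lemma \ref{rank=1+e-f},
\[
\varepsilon(B_n^{\times|A})=\text{rank}(\text{adj}(C_n)+\bm{D}_{n,A'})_{\mathbb{Z}_2},
\]
where $A'$ is the symmetric difference of $A$ with the set of originally twisted loops. As $A$ ranges over all subsets, so does $A'$. Hence
\[
{}^{\partial}\varepsilon^{\times}_{B_n}(z)=\sum_{d\in\{0,1\}^n} z^{\,n-\dim\ker M_d},\qquad M_d=\text{adj}(C_n)+\text{diag}(d_1,\dots,d_n).
\]

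\textbf{Step 2: kernel as a fixed space.} Label the cycle $1,2,\dots,n$ and read indices mod $n$. A vector $x$ lies in $\ker M_d$ iff $x_{i-1}+d_ix_i+x_{i+1}=0$ for all $i$, i.e.
\[
(x_{i+1},x_i)^T=T_{d_i}(x_i,x_{i-1})^T,\qquad T_0=\begin{pmatrix}0&1\\1&0\end{pmatrix},\quad T_1=\begin{pmatrix}1&1\\1&0\end{pmatrix}.
\]
So $x$ is determined by $(x_1,x_0)=(x_1,x_n)$, and cyclic consistency says exactly that this pair is fixed by $P_d=T_{d_n}\cdots T_{d_1}$. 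Thus $\dim\ker M_d=\dim\ker(P_d-I)$.

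In $GL(2,2)\cong S_3$, acting on the three nonzero vectors of $\mathbb{GF}(2)^2$, this fixed space has:
\begin{itemize}
\item dimension $2$ when $P_d=I$;
\item dimension $1$ when $P_d$ is one of the three involutions (transpositions);
\item dimension $0$ when $P_d$ has order $3$.
\end{itemize}
Consequently the coefficients of $z^{n-2}$, $z^{n-1}$ and $z^{n}$ are the numbers of words of length $n$ in $\{T_0,T_1\}$ whose product is, respectively, the identity, a transposition, or a $3$-cycle. This also recovers that only these three degrees occur, consistent with Lemmas \ref{interpolation of Petrial} and \ref{highest-term of Petrial}.

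\textbf{Step 3: counting words.} $T_0$ is a transposition and $T_1$ is a $3$-cycle ($T_1^2=T_1+I\neq I$ and $T_1^3=I$).
\begin{itemize}
\item \emph{Middle coefficient.} The product is odd exactly when an odd number of the letters are $T_0$. That happens for $2^{n-1}$ words, which gives the coefficient $2^{n-1}$ of $z^{n-1}$.
\item \emph{Identity count.} I will use Fourier analysis on $S_3$:
\[
\#\{P_d=I\}=\frac16\sum_{\rho} d_\rho\,\mathrm{tr}\big((\rho(T_0)+\rho(T_1))^n\big).
\]
The trivial representation contributes $2^n$. The sign representation contributes $(-1+1)^n=0$. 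For the $2$-dimensional representation, $S=\rho(T_0)+\rho(T_1)$ is a reflection plus a rotation by $2\pi/3$. I expect $S^2=I-S$ up to checking, with $\mathrm{tr}S=-1$, so its eigenvalues are the roots of $\lambda^2+\lambda-1$ or similar. Its power traces should collapse to $\pm$ small constants, giving $\#\{P_d=I\}=(2^{n-1}\pm1)/3$ with the sign determined by the parity of $n$.
\item \emph{Last coefficient.} The top coefficient is then $2^{n-1}-\#\{P_d=I\}=(2^n\pm1)/3$ after simplification.
\end{itemize}
As a fallback, I would instead set up the recursion $a_n=$ (number of words equal to $I$), $b_n=$ (number equal to a fixed $3$-cycle). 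The $3$-cycles are conjugate, and conjugation by $T_0$ swaps them, so by symmetry we get a linear recurrence such as $a_{n}=a_{n-1}+2a_{n-2}$-type. I would solve it with initial values $n=3,4$ checked by hand.

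The main obstacle is getting the representation-theoretic trace, equivalently the recurrence, exactly right, including the parity-dependent sign. Checking the final formula against small cases such as $n=3$ (where $I(B_3)=K_3$, so it must agree with Theorem \ref{K_n}: $1\cdot z+4z^2+3z^3$) will pin down any sign errors.
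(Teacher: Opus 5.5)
Your route is sound and genuinely different from the paper's. However, the one step you left unchecked is guessed wrongly, so it has to be actually carried out.

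\textbf{The trace computation.} In the $2$-dimensional representation, $S=\rho(T_0)+\rho(T_1)$ does not satisfy $S^2=I-S$. That relation would give $\pm$Lucas-number traces, and at $n=3$ it would count $0$ identity words instead of $1$. The correct computation uses three facts:
\begin{itemize}
\item $T_0T_1$ and $T_1T_0$ are the two involutions other than $T_0$;
\item in this representation the three reflections sum to $0$;
\item $\rho(T_1)+\rho(T_1^2)=-I$.
\end{itemize}
Together these give
\[
S^2=I+\rho(T_0T_1)+\rho(T_1T_0)+\rho(T_1^2)=I-\rho(T_0)-I-\rho(T_1)=-S .
\]
Equivalently, $\operatorname{tr}S=-1$ and $\det S=0$, so the eigenvalues are $0$ and $-1$ and $\operatorname{tr}S^n=(-1)^n$. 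Hence
\[
\#\{d: P_d=I\}=\tfrac16\bigl(2^n+0+2(-1)^n\bigr)=\tfrac{2^{n-1}+(-1)^n}{3}.
\]
Exactly $2^{n-1}$ words have an even number of $T_0$'s. So the coefficient of $z^n$ is $2^{n-1}-\frac{2^{n-1}+(-1)^n}{3}=\frac{2^n-(-1)^n}{3}$, which is the statement in both parities.

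\textbf{The fallback.} Its justification is wrong. Conjugation by $T_0$ sends $T_1$ to $T_1^2$, so it does not preserve the generating set $\{T_0,T_1\}$. In fact the numbers of words equal to $T_1$ and to $T_1^2$ already differ for $n=1,2$. The recurrence you guessed is nevertheless true. The element $\sigma=T_0+T_1$ has images $2$, $0$ and $S$ in the three irreducibles, so $\sigma^3=\sigma^2+2\sigma$ in the group algebra. This gives $a_n=a_{n-1}+2a_{n-2}$ with $a_1=0$ and $a_2=1$.

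\textbf{Comparison with the paper.} The paper never uses transfer matrices. It deletes one edge $e$ of the cycle and argues topologically that the two ends of $e$ always lie in a single face of every partial Petrial of $B_n\setminus e$. That argument relies on the face bound of Lemma~\ref{faceofpath} for path bouquets and on a figure. It then applies its $(1+z)$ lemma and multiplies the known path polynomial of Theorem~\ref{P_n} by $1+z$. That route is short, but it depends on the earlier path result and on a picture-based face argument. Yours is self-contained once genus is identified with rank, which is the content of Lemma~\ref{rankofpoly_Petrial}. You handle that identification slightly more cleanly, via the symmetric difference, rather than assuming $B_n$ is orientable. Your method also explains directly why only the degrees $n-2$, $n-1$, $n$ occur. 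It shows why the middle coefficient is exactly $2^{n-1}$, namely the parity of the number of $T_0$'s. Finally, it adapts to the path case by imposing $x_0=x_{n+1}=0$ instead of periodicity.
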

\begin{proof}
Let $e\in E(B_n)$. 
Since $I(B_n)=C_n$, the intersection graph of $B_n\setminus e$ is a path (indeed $I(B_n\backslash e)\cong P_{n-1}$). By Lemma~\ref{faceofpath}, one has that $|F((B_n\backslash e)^{\times|A})|=1$ or $2$ for every $A\subseteq E(B_n\backslash e)$.\\
\indent We claim that, for every $A\subseteq E(B_n\backslash e)$, the two endpoints of $e$ lie in the same face of $(B_n\backslash e)^{\times|A}$. For contradiction, suppose that for some $A$, the two endpoints of $e$ lie in distinct faces of $(B_n\backslash e)^{\times|A}$. Label the three relevant boundary arcs as shown in Fig.~\ref{Fig_C_n} (red, pink, blue), so that the two ends of $e$ lie on the red and pink arcs. If the red and pink arcs are in different faces, then one of them, say the red arc, shares a face with the blue arc. Insert an edge $e'$ whose endpoints lie on the blue and red arcs. Then $I((B_n\backslash e)\cup\{e'\})\cong P_n$, and after taking the appropriate partial Petrial $B_n^{\times |A}$ or $B_n^{\times |A\cup\{e\}}$, we obtain a ribbon graph with three faces, contradicting that the number of faces in every partial Petrial of a path-type bouquet is either $1$ or $2$. Hence the two endpoints of $e$ always lie in the same face of $(B_n\backslash e)^{\times|A}$.
Therefore, the hypothesis of Lemma \ref{(1+z)*} applies, and
\vspace{-0.6em}
\[^\partial\varepsilon^{\times}_{B_n}(z)=(1+z)\,^\partial\varepsilon^{\times}_{B_n\backslash e}(z).\]

\vspace{-0.6em}
\noindent Finally, $I(B_n\backslash e)\cong P_{n-1}$, so $^{\partial}\varepsilon^{\times}_{B_n\backslash e}(z)$ is 
given in Theorem \ref{P_n}. Multiplying this polynomial by $(1+z)$ yields the claimed closed form for $^{\partial}\varepsilon^{\times}_{B_n}(z)$.
\begin{figure}[htbp]
	\centering
	\includegraphics[
	width=0.13\textwidth]{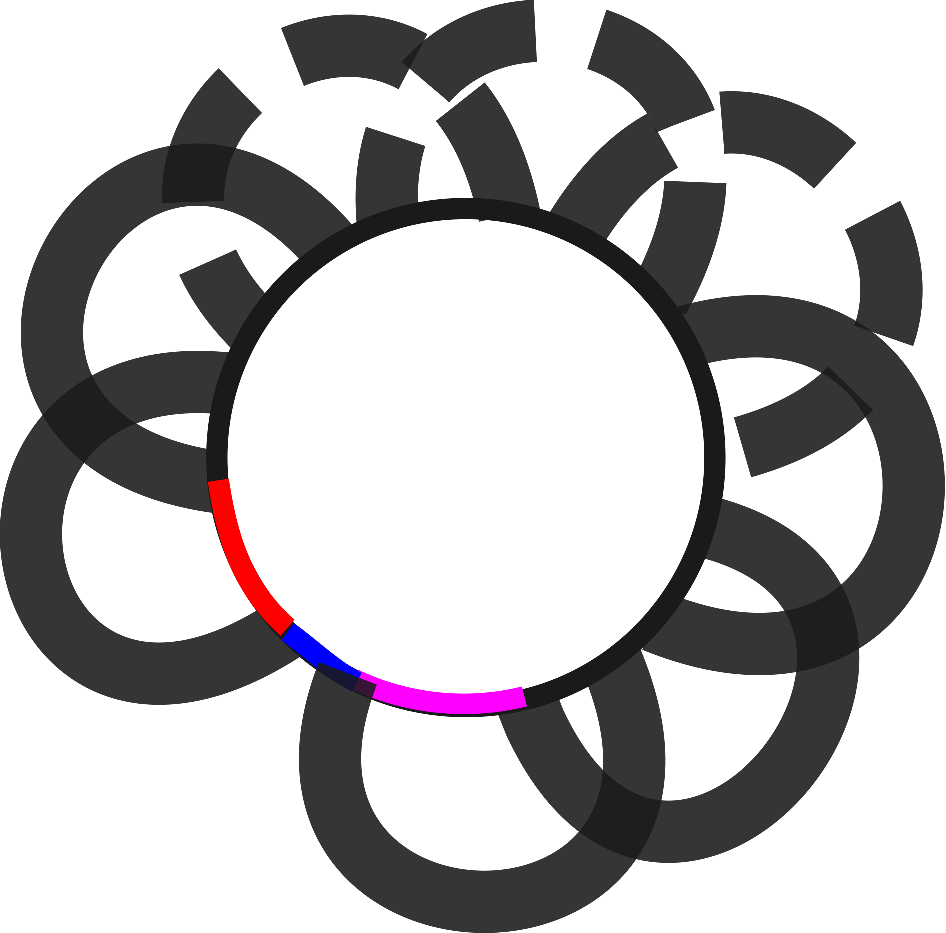}\\
	\caption{The bouquet $B_{n}\backslash e$ with $I(B_{n}\backslash e)=P_{n-1}$.}
	\label{Fig_C_n}
\end{figure}
\end{proof}

\section{Partial Petrial Polynomials of Bouquets with Lowest Degree 2}

Yan and Li gave a characterization of the bouquet $B_n$ such that $^{\partial}{\varepsilon^{\times}_{B_n}}(z)$ has a nonzero 
first-order term. 
In this section, we characterize the  bouquet $B_n$ for which $^{\partial}{\varepsilon^{\times}_{B_n}}(z)$ has a lowest nonzero term of degree 2. 

\subsection{Characterization of Bouquets}

By Theorems \ref{first order_K_n_Petrial} and \ref{K_n}, $^{\partial}{\varepsilon_{B_n}^{\times}}(z)$ has both 
first-order and  
second-order terms if~and~only~if~$n\geq 2$ and $I(B_n)=K_n$.~Thus, to characterize the bouquet $B_n$ for which $^{\partial}{\varepsilon_{B_n}^{\times}}(z)$ contains~the~second-order term, we only need to consider that the lowest-degree term of $^{\partial}{\varepsilon_{B_n}^{\times}}(z)$ is of degree $2$.

\vspace{-0.3em}
\begin{lem}\emph{(\cite{Yangyan})}\label{formofsurfaces}
Let $B_n$ be a bouquet with $n$ edges and $n\geq 2$. If $\varepsilon(B_n)=2$, then the signed rotation of $B_n$ has the following form:
\vspace{-0.3em}
\begin{itemize}
\item [\rm (1)] $B_n$ is a prime toroidal bouquet if and only if the signed rotation of $B_n$ has the form
\vspace{-0.5em}
\begin{equation}\tag {2.1}
a_1 \cdots a_ib_1\cdots b_jd_1\cdots d_ka_i\cdots a_1b_j \cdots b_1d_k\cdots d_1,
\end{equation}

\vspace{-0.8em}
\noindent where $i, j \geq 1, k \geq 0$ and $i + j + k = n$.
\vspace{-0.3em}
\item [\rm (2)] $B_n$ is a prime Klein bottle bouquet if and only if the signed rotation of $B_n$ has the form
	\vspace{-0.5em}
	\begin{equation}\tag{2.2}
		a_1\cdots a_id_1\cdots d_k(-a_1)\cdots (-a_i)b_1 \cdots b_jd_k\cdots d_1(-b_1)\cdots (-b_j),
	\end{equation}
	
\vspace{-1em}
\noindent where $n=i+j+k$, $i,k\geq 1$ and $j\geq0$.
\end{itemize}
\end{lem}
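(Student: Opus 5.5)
The plan is to turn the genus condition into a rank condition and classify by intersection graph. By Lemma~\ref{varepsilon} and Lemma~\ref{rank=1+e-f}, $\varepsilon(B_n)=\text{rank}(\text{adj}(SI(B_n)))_{\mathbb{Z}_2}$. So $\varepsilon(B_n)=2$ means exactly that the symmetric matrix $M=\text{adj}(SI(B_n))$ has rank $2$ over $\mathbb{GF}(2)$. I will also use that $B_n$ is prime if and only if $I(B_n)$ is connected. In particular, a prime bouquet with $n\ge 2$ has no trivial loop, so no zero row in $M$.

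\textbf{Orientable case.} Here $M$ is alternating (zero diagonal), and its row space is a $2$-dimensional space $\langle u,v\rangle$. Hence every loop has row $u$, $v$ or $u+v$, which partitions the loops into classes $A,B,D$. Since $M$ is symmetric with zero diagonal, rows in the same class give zero entries between each other. Rows in distinct classes give entry $1$, because $u,v$ are independent and each value is attained. Therefore $I(B_n)$ is a complete multipartite graph $K_{i,j,k}$ with $i,j\ge 1$ (rank $2$ forces at least two classes) and $k\ge0$.

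\textbf{Realizing the rotation.} Next I will show that any signed rotation with intersection graph $K_{i,j,k}$ and all loops untwisted must be (2.1). Loops in a class are pairwise non-interlaced, and all of them interlace a fixed loop of another class. From this I will argue that the loops of one class are nested ``in parallel'' as $a_1\cdots a_i\cdots a_i\cdots a_1$: one endpoint of each lies on each of the two arcs cut out by that fixed loop. Running the same argument for the other classes then pins down the interleaving as in (2.1). Conversely, a direct check shows that (2.1) has intersection graph $K_{i,j,k}$, and its adjacency matrix has rank $2$. So (2.1) has genus $2$, and it is prime since the graph is connected.

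\textbf{Nonorientable case.} Now some diagonal entries are $1$. Again the rows take at most three nonzero values in $\langle u,v\rangle$. A loop $x$ with row $r$ is twisted iff $r_x=1$, and this depends only on the class. Writing the $2\times2$ (or $3\times3$) ``class Gram matrix'' of the nonalternating symmetric rank-$2$ form, I will classify up to relabeling the possible patterns of twisted and untwisted classes and their adjacencies. Combined with connectivity, this should yield two twisted classes (the $a$'s and $b$'s, with $b$ possibly empty) and one untwisted class $d$, with $k\ge1$ for primality when needed. The structural step is then to realize these combinatorics as the word (2.2), analogously to the orientable case. Here the signs must also be tracked, since twisted loops $x$ appear as $x\cdots(-x)$. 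The converse is once more a rank computation.

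\textbf{Main obstacle.} I expect the hardest part to be the forward rotation-realization step in both cases. Knowing the intersection graph and the signs does not immediately determine the cyclic word. There I would proceed by induction on $n$: delete a loop from the largest class, apply the induction hypothesis to get form (2.1) or (2.2), and show that the only positions for reinserting the loop that preserve its interlacing pattern are those keeping the stated form.
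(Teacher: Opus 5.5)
The paper does not prove this lemma. It is quoted from \cite{Yangyan} and used only through Lemma~\ref{lem_SI_B}, so your argument has to stand on its own. In outline it does: you work algebraically first and combinatorially second.

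\textbf{The rank reduction and classification.} The identity $\varepsilon(B_n)=\mathrm{rank}(\mathrm{adj}(SI(B_n)))_{\mathbb{Z}_2}$, from Lemmas~\ref{varepsilon} and~\ref{rank=1+e-f}, is correct. Your twin-class analysis is cleanest written as $M=PGP^{T}$, where $P$ is the $n\times m$ class-indicator matrix. Then:
\begin{itemize}
\item $\mathrm{rank}\,G=2$ and $m\in\{2,3\}$;
\item the rows of $G$ are distinct and nonzero;
\item $G_{ss}$ is the common twist of class $s$, so twisted classes are cliques and untwisted classes are independent sets;
\item for $m=3$, the third row is the sum of the other two.
\end{itemize}
A finite check leaves exactly five patterns:
\begin{itemize}
\item $J-I$ of size $2$ or $3$, giving $K_{i,j}$ and $K_{i,j,k}$;
\item $(\begin{smallmatrix}1&1\\1&0\end{smallmatrix})$, giving $F_{i,0,k}$;
\item the $3\times 3$ pattern with two non-adjacent twisted classes both joined to one untwisted class, giving $F_{i,j,k}$;
\item $\mathrm{diag}(1,1)$, which is disconnected and is what $k\ge1$ excludes.
\end{itemize}
This also replaces your terse ``each value is attained'' argument for the off-diagonal ones.

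You do need one line for the direction ``disconnected $\Rightarrow$ join'' of ``prime $\Leftrightarrow I(B_n)$ connected''. That is the direction you use to rule out trivial loops and $\mathrm{diag}(1,1)$.

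The payoff of your route is that Lemma~\ref{lem_SI_B}, which the paper derives \emph{from} the word forms, comes out first and purely algebraically. That alone already suffices for Theorem~\ref{thm:main2}. The cost is a separate chord-diagram uniqueness step for the rotation statement itself.

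\textbf{The uniqueness step.} This is where your sketch is thin. Running the within-class ``nested in parallel'' argument class by class does not tell you how different classes interleave. The tool that settles it directly, without induction, is this: if $x,y$ both cross a chord $c$, then $x$ and $y$ cross iff their ends appear in the same order on the two arcs cut off by $c$, read in the same direction.

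In the orientable case, fix $b\in B$.
\begin{itemize}
\item Pairs within $A$ and pairs within $D$ reverse order across the two sides; mixed $A$--$D$ pairs keep it.
\item So a pattern $a\,d\,a'$ on one side would force $a'<a<d<a'$ on the other, a contradiction.
\item Hence the two sides read $A\,D$ and $A^{\mathrm{rev}}D^{\mathrm{rev}}$.
\item Every other $b'$ has both ends on one side and must enclose all $A\cup D$ ends there.
\end{itemize}
This gives $B\,A\,D\,B^{\mathrm{rev}}A^{\mathrm{rev}}D^{\mathrm{rev}}$, which is (2.1).

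In the nonorientable case, fix $d\in D$; this is where $k\ge1$ is used.
\begin{itemize}
\item Now pairs within $A$ and pairs within $B$ keep order, and mixed pairs reverse.
\item So the two sides read $A\,B$ and $B\,A$.
\item The remaining $d$'s nest around these blocks.
\end{itemize}
This gives $D\,A\,B\,D^{\mathrm{rev}}B\,A$, a rotation of (2.2).

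Your induction-with-reinsertion plan would also work, since deleting a loop from a class of size $\ge2$ leaves $G$ unchanged. However, it reduces to this same order comparison.
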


The following lemma can be directly derived from Lemma \ref{formofsurfaces} and the definition of  the intersection graph. 
\vspace{-0.3em}
\begin{lem}\label{lem_SI_B}
For a prime bouquet $B_n$, the following properties hold:
\vspace{-0.3em}
\begin{itemize}
\item [\rm (1)]  If $B_n$ is a toroidal bouquet, then 
$I{(B_n)}=K_{i,j}$ or $K_{i,j,k}$, where $n=i+j$ or $n=i+j+k$;
\vspace{-0.3em}
\item [\rm (2)] If $B_n$ is a Klein bottle bouquet, then $I{(B_n)}=F_{i,j,k}$, where $n=i+j+k$, $i,k\geq 1,j\geq0$. 
\end{itemize}
\end{lem}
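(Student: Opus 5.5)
The plan is to read off the interlacing structure directly from the two normal forms in Lemma~\ref{formofsurfaces}. Since $B_n$ is prime with $\varepsilon(B_n)=2$, it is either toroidal or a Klein bottle bouquet. Its signed rotation is therefore of the form (2.1) or (2.2). By definition, two loops are adjacent in $I(B_n)$ exactly when their four half-edges alternate in the cyclic order. So the work reduces to inspecting, for each pair of loops, the relative positions of their half-edges within the given words.

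For the toroidal case (2.1), I will split the loops into three blocks: $A=\{a_1,\dots,a_i\}$, $B=\{b_1,\dots,b_j\}$ and $D=\{d_1,\dots,d_k\}$. Within one block the pattern is $x_1\cdots x_m\cdots x_m\cdots x_1$, a nested (reversed) pattern, so no two loops of the same block interlace; each block is an independent set. For loops from different blocks, take $a_s$ and $b_t$. The order of their ends is $a_s\, b_t\, a_s\, b_t$, which is alternating. The same check works for $a_s,d_t$ (order $a_s d_t a_s d_t$) and for $b_t,d_r$ (order $b_t d_r b_t d_r$). Hence every cross pair is adjacent, and $I(B_n)$ is complete multipartite with parts $A,B,D$. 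This gives $K_{i,j}$ when $k=0$ and $K_{i,j,k}$ when $k\ge1$.

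For the Klein bottle case (2.2), the same inspection applies, ignoring signs since only positions matter. Within $A$ the word is $a_1\cdots a_i\cdots a_1\cdots a_i$, so the pattern is $x_1..x_m x_1..x_m$, and every pair alternates; $A$ induces a clique. The same holds for $B$. Within $D$ the order is $d_1..d_k\, d_k..d_1$, which is nested, so $D$ is independent. For cross pairs:
\begin{itemize}
\item $a_s,b_t$: order $a\,a\,b\,b$, not interlaced.
\item $a_s,d_r$: order $a\,d\,a\,d$, interlaced.
\item $b_t,d_r$: order $d\,b\,d\,b$, interlaced.
\end{itemize}
The resulting graph is the one the paper denotes $F_{i,j,k}$: two disjoint cliques $K_i$ and $K_j$, each joined completely to an independent set of size $k$. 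I will state this description explicitly as the definition of $F_{i,j,k}$ being used, with $i,k\ge1$ and $j\ge0$ inherited from Lemma~\ref{formofsurfaces}.

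No real obstacle is expected. The only care needed is in the bookkeeping of cyclic positions, because alternation must be judged cyclically, and in matching the paper's (possibly later-defined) notation $F_{i,j,k}$ with the graph obtained. Representative checks for one pair from each block combination suffice, by the symmetry of indices within each block.
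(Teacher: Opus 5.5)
Your proposal is correct and follows the same route as the paper, which simply says the lemma is read off directly from the normal forms (2.1) and (2.2) of Lemma~\ref{formofsurfaces} together with the definition of interlacing. Your block-by-block check supplies exactly the verification the paper leaves implicit, and the resulting graphs agree with $K_{i,j}$, $K_{i,j,k}$ and Definition~\ref{def:F} of $F_{i,j,k}$. In (2.1), loops within a block are nested and loops across blocks alternate. In (2.2), the $a$'s and the $b$'s each form cliques, the $d$'s are nested, the pair $a\,a\,b\,b$ is not interlaced, and $a$–$d$ and $b$–$d$ pairs alternate.
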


Let $K_{i,j,k}$ denote the complete tripartite graph when $k\neq0$, and let $K_{i,j,0}$ denote the complete bipartite graph $K_{i,j}$. Define $F_{i,j,k}$  as follows.
\vspace{-0.3em}
\begin{defi}\label{def:F}
A graph $G$ is called $F_{i,j,k}$ if its vertex set $V(G)$ can be partitioned into three disjoint subsets $V_1$, $V_2$, and $V_3$, i.e., $V(G)=V_1\cup V_2\cup V_3$, satisfying the following two conditions:
\vspace{-1em}
\noindent
\begin{itemize}
	\vspace{-0.8em}
    \item [\rm (1)] The induced subgraphs $G[V_1]$ and $G[V_2]$ are complete graphs, $G[V_1] \cong K_i$ and $G[V_2] \cong K_j$, and there are no edges between $V_1$ and $V_2$ in $G$. It is required that $i\geq 1$ and $j\geq 0$.
    \vspace{-0.6em}
    \noindent
    \item [\rm (2)] The induced subgraph $G[V_3]$ is a stable set (an independent set) with $|V_3|=k\geq 1$. Every vertex $v\in V_3$ is adjacent to all vertices in $V_1\cup V_2$.
\end{itemize}
\end{defi}

\begin{thm}\label{thm:main2}
For~a~prime~bouquet~$B_n$~with~$n\geq3$~edges,~the lowest-degree~term~of~the~partial~Petrial~ polynomial~$^{\partial}{\varepsilon_{B_n}^{\times}}(z)$~is~of~degree~$2$~if~and~only~if~$I(B_n)$~satisfies~one~of~the~following~conditions
\vspace{-1.8em}
\begin{itemize}
	\item [\rm (1)] $I(B_n)=K_{i,j,k}$, where $k=0$ \text{or} $(i+j \geq 3 \text{ and } k \neq 0)$;
	\vspace{-0.3em}
	\item [\rm (2)] $I(B_n)=F_{i,j,k}$, where $j \neq 0$ \text{or} $(j=0 \text{ and } k \geq 2)$.
\end{itemize}
\end{thm}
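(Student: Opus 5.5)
Write $m=\min supp\left({}^{\partial}\varepsilon^{\times}_{B_n}\right)$. The lowest degree is $2$ exactly when two things hold: some partial Petrial $B_n^{\times|A}$ has Euler genus $2$, and no partial Petrial has Euler genus $0$ or $1$. Partial Petrials of a prime bouquet are again prime bouquets, since the intersection graph is unchanged and primality of a bouquet is connectivity of $I(B_n)$. By Lemma \ref{G,GA}, the polynomial is the same for every partial Petrial. So $m\le 2$ holds if and only if some bouquet with intersection graph $I(B_n)$ is a prime toroidal or Klein bottle bouquet. By Lemma \ref{lem_SI_B}, that forces $I(B_n)\in\{K_{i,j,k},F_{i,j,k}\}$. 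Conversely, each graph $K_{i,j,k}$ or $F_{i,j,k}$ is realized by the signed rotations (2.1) or (2.2) of Lemma \ref{formofsurfaces}, with appropriate nonnegative parameters. Hence $m\le2$ if and only if $I(B_n)$ is one of these two types.

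Next I handle degree $0$. Euler genus $0$ means the partial Petrial is orientable and planar, which requires an empty intersection graph. For $n\ge 3$ with $I(B_n)$ connected this is impossible, so $m\ge1$ automatically.

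It remains to exclude degree $1$. By Theorem \ref{first order_K_n_Petrial}, $m=1$ if and only if $I(B_n)$ is complete. So the theorem reduces to deciding when $K_{i,j,k}$ or $F_{i,j,k}$ is a complete graph and removing those cases.
\begin{itemize}
\item For $k=0$: $K_{i,j}$ with $i,j\ge1$ and $i+j=n\ge3$ is never complete, so every case with $k=0$ qualifies.
\item For $k\ge1$: $K_{i,j,k}$ is complete exactly when $i=j=k=1$. This is $K_3$, the only case with $n=3$, and it is excluded by requiring $i+j\ge3$. One should check here that $i+j\le 2$ with $k\ne0$ and $n\ge3$ gives $K_{1,1,k}$. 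For $k\ge2$ this graph is not complete, so it should also qualify. It can be rewritten as $F$-type or as $K_{1,k,1}$ after relabeling the parts. I would verify that the statement's condition is meant up to such relabeling; otherwise I would state the condition as ``$I(B_n)$ is not complete''.
\item For $F_{i,j,k}$: the graph is complete if and only if $j=0$ and $k=1$, in which case it is $K_{i+1}$. This gives exactly condition (2).
\end{itemize}

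The step I expect to be the main obstacle is the converse realization. I need to confirm that (2.1) with $k=0$ produces intersection graph $K_{i,j}$, and that (2.2) with $j=0$ produces $F_{i,0,k}$, a clique joined completely to a stable set. I also need these realizations to be prime with Euler genus exactly $2$, so that $2\in supp$. This requires computing the interlacings in the given words. The alternative is to invoke Lemma \ref{formofsurfaces} in the ``if'' direction, which gives Euler genus $2$ for any word of that form. Combined with Lemma \ref{Petrial_orientable}, this shows the genus-$2$ term occurs, and interpolation (Lemma \ref{interpolation of Petrial}) is not even needed.
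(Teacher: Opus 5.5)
Your proposal is correct and takes essentially the paper's route. The forms (2.1)/(2.2) of Lemma \ref{formofsurfaces}, together with Lemmas \ref{lem_SI_B}, \ref{G,GA} and \ref{Petrial_orientable}, show that $z^2$ occurs exactly when $I(B_n)$ is of type $K_{i,j,k}$ or $F_{i,j,k}$. Theorem \ref{first order_K_n_Petrial} then removes precisely the complete cases $K_{1,1,1}$ and $F_{i,0,1}\cong K_{i+1}$, which you identify more explicitly than the paper does. One caveat: reading ``$m=1$ forces $I(B_n)$ complete'' off that theorem as stated does use interpolation and the top degree $n$ (Lemmas \ref{interpolation of Petrial} and \ref{highest-term of Petrial}). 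Your worry about $K_{1,1,k}$ with $k\ge 2$ resolves itself: $K_{1,1,k}\cong F_{2,0,k}$ satisfies condition (2), and $K_{1,1,k}\cong K_{1,k,1}$ satisfies (1), so the statement needs no rewording.
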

\begin{proof}
$(\Leftarrow)$ Suppose $I(B_n)$ is either $K_{i,j,k}$ with $k=0$ or $i+j\geq 3, k\neq 0$, or $F_{i,j,k}$ with $j \neq 0$ or $j=0, k\geq 2$, for $i+j+k=n\geq 3$.
In these cases, there exists a subset $A$ of edges such that the partial Petrial $B_n^{\times|A}$ is either a toroidal bouquet (Euler genus $2$) or a Klein bottle bouquet (Euler genus $2$) as shown in Lemma \ref{formofsurfaces} and Lemma \ref{lem_SI_B}.
By Theorem \ref{first order_K_n_Petrial}, the lowest nonzero term occurs at degree $2$ in the partial Petrial polynomial.\\
($\Rightarrow$) Suppose the lowest-degree term of $^{\partial}{\varepsilon_{B_n}^{\times}}(z)$ is degree $2$. Then, for some subset~$A\subseteq E(B_n)$, $B_n^{\times|A}$ has Euler genus $2$, and by the classification of allowable intersection graphs (Lemma \ref{lem_SI_B}), this occurs only when ~$I(B_n)$~ is ~$K_{i,j,k}$~ or ~$F_{i,j,k}$~ within the specified parameter ranges. If $I(B_n)$ were $K_n$, all degrees from $1$ to $n$ would appear by Theorem \ref{first order_K_n_Petrial}, which is excluded by assumption.
%
\end{proof}

\subsection{Computation of Partial Petrial Polynomials}
To complete the partial Petrial polynomial for bouquets contains a second-order term, by Theorem \ref{thm:main2}, we only need to compute these polynomials for the bouquets  whose intersection graphs are $K_{i,j,k}$ and $F_{i,j,k}$, respectively.  We first introduce an equivalent representation of the partial Petrial polynomial of bouquets from the perspective of matrix rank.  
\begin{lem}\label{rankofpoly_Petrial}
	For a prime bouquet $B_n$, then the partial Petrial polynomial of $B_n$ is 
	as follows
	\vspace{-0.3em}
	\[^{\partial}{\varepsilon_{B_n}^{\times}}(z)=\sum_{A \subseteq E(B_{n})} z^{\text{rank}(\text{adj}(SI(B_n^{\times|A})))_{\mathbb{Z}_2}}=\sum_{A \subseteq \mathbb{Z}_{n}} z^{\text{rank}(\text{adj}(I(B_n))+\bm{D}_{n,A})_{\mathbb{Z}_2}}.\]
\end{lem}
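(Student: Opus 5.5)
Both equalities come from combining Lemma \ref{varepsilon} with Lemma \ref{rank=1+e-f}; the one genuine point to check is how the signed intersection graph changes under a partial Petrial.

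\textbf{First equality.} By Definition \ref{poly_Petrial}, $^{\partial}{\varepsilon_{B_n}^{\times}}(z)=\sum_{A}z^{\varepsilon(B_n^{\times|A})}$. For every $A\subseteq E(B_n)$, the partial Petrial $B_n^{\times|A}$ is again a bouquet: the operation only re-attaches edge ends with a half-twist, so there is still one vertex and $n$ edges. Lemma \ref{varepsilon} then gives
\[
\varepsilon(B_n^{\times|A})=1+|E(B_n^{\times|A})|-|F(B_n^{\times|A})|.
\]
Lemma \ref{rank=1+e-f}, applied to the bouquet $B_n^{\times|A}$, says this quantity equals $\text{rank}(\text{adj}(SI(B_n^{\times|A})))_{\mathbb{Z}_2}$. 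Substituting termwise yields the first equality.

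\textbf{Second equality.} I would compute $SI(B_n^{\times|A})$ from the signed rotation. Forming the Petrial of an edge $e$ detaches one end, gives it a half-twist and reattaches it at the same position in the cyclic order. So the positions of all half-edges in the rotation are unchanged, and whether two loops alternate is unchanged. Hence $I(B_n^{\times|A})=I(B_n)$; this is also implicit in Lemma \ref{G,GA} and Lemma \ref{Petrial_orientable}.

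The only change is that the sign of one half-edge of each $e\in A$ flips. This toggles $e$ between twisted and untwisted and leaves every other loop alone. Therefore the diagonal of $\text{adj}(SI(B_n^{\times|A}))$ is the diagonal of $\text{adj}(SI(B_n))$ with the entries indexed by $A$ flipped mod $2$:
\[
\text{adj}(SI(B_n^{\times|A}))=\text{adj}(SI(B_n))+\bm{D}_{n,A}\quad\text{over }\mathbb{Z}_2 .
\]
Write $T$ for the set of twisted loops of $B_n$, so that $\text{adj}(SI(B_n))=\text{adj}(I(B_n))+\bm{D}_{n,T}$. Then
\[
\text{adj}(SI(B_n^{\times|A}))=\text{adj}(I(B_n))+\bm{D}_{n,A\triangle T}.
\]

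Now label the edges by $\mathbb{Z}_n$. The map $A\mapsto A\triangle T$ is a bijection of the power set of $\mathbb{Z}_n$. Reindexing the sum along this bijection turns $\sum_A z^{\text{rank}(\text{adj}(I(B_n))+\bm{D}_{n,A\triangle T})}$ into $\sum_{A\subseteq\mathbb{Z}_n}z^{\text{rank}(\text{adj}(I(B_n))+\bm{D}_{n,A})_{\mathbb{Z}_2}}$, which is the second equality. Alternatively, one can first replace $B_n$ by $B_n^{\times|T}$, which has the same polynomial by Lemma \ref{G,GA} and has no twisted loops, and then take $T=\emptyset$.

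\textbf{Main obstacle.} The step needing care is justifying that a partial Petrial preserves interlacement and flips exactly the twist status of the edges in $A$. This is a direct check on the signed rotation, since only the sign of one end of each edge in $A$ changes. Primality of $B_n$ is not used anywhere in the argument.
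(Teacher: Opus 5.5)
Your proof is correct and follows the paper's route. You equate $\varepsilon(B_n^{\times|A})$ with $\text{rank}(\text{adj}(SI(B_n^{\times|A})))_{\mathbb{Z}_2}$ via Lemmas \ref{varepsilon} and \ref{rank=1+e-f}, then write $\text{adj}(SI(B_n^{\times|A}))$ as $\text{adj}(I(B_n))$ plus a diagonal matrix. The differences are minor: the paper first reduces to an orientable $B_n$ via Lemma \ref{G,GA}, while your main line reindexes by $A\mapsto A\triangle T$ (and you note the paper's reduction as an alternative), and you state explicitly that partial Petrials preserve interlacement, which the paper leaves implicit.
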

\vspace{-1em}
\begin{proof}
	The partial Petrial polynomial is defined as $^{\partial}\varepsilon_{B_n}^{\times}(z) = \sum_{A \subseteq E(B_n)} z^{\varepsilon(B_n^{\times|A})}$. By Lemma \ref{G,GA}, studying the partial Petrial polynomial of a bouquet is equivalent to studying the polynomial of any of its partial Petrial graphs.  
	Assuming that the base bouquet $B_n$ is orientable (i.e., all loops are untwisted). Our goal is to find an equivalent expression for $\varepsilon(B_n^{\times|A})$ for any $A \subseteq E(B_n)$.\\
	\indent An arbitrary partial Petrial $B_n^{\times|A}$ is itself a bouquet. Its Euler genus is given by $\varepsilon(B_n^{\times|A}) = 1 + |E(B_n)| - |F(B_n^{\times|A})|$ by Lemma \ref{varepsilon}, and the rank of the adjacency matrix of its signed~intersection graph is $\text{rank}(\text{adj}(SI(B_n^{\times|A})))_{\mathbb{Z}_2} = |E(B_n)| - |F(B_n^{\times|A})| + 1$ by Lemma \ref{rank=1+e-f}. By directly comparing these two formulas, we can equate the Euler genus with the rank of the matrix, that is
	\vspace{-0.2em}
	\[\varepsilon(B_n^{\times|A}) = \text{rank}(\text{adj}(SI(B_n^{\times|A})))_{\mathbb{Z}_2}.\]
	
	\vspace{-0.2em}
	\indent The final step is to relate this matrix back to the intersection graph of the original bouquet.~
	The partial Petrial  $B_n^{\times|A}$ twists the edges in set $A$.  
	Since $B_n$ is orientable, 
	the matrix $\text{adj}(SI(B_n^{\times|A}))$ is simply the adjacency matrix of the base intersection graph, $\text{adj}(I(B_n))$, plus the diagonal matrix $\bm{D}_{n,A}$,  by the definition of adjacency matrix of signed intersection graph. Therefore, we have:
	\vspace{-0.3em}
	$$\varepsilon(B_n^{\times|A}) = \text{rank}(\text{adj}(I(B_n)) + \bm{D}_{n,A})_{\mathbb{Z}_2}.$$
	
	\vspace{-0.3em}
	\noindent 	Substituting this expression 
	for the exponent back into the definition of the polynomial completes the proof.
\end{proof}

\vspace{-0.3em}
Subsequently, we introduce the following notational conventions for our computation.

(1) 
Let $\sum_{a=c_1}^{c_2} \binom{b}{a}=0$ if 
$c_1 > c_2$;
 
(2) Let $\binom{j}{k} = 0$ if $k<0$ or $k > j$ or $j=0$, and define
\vspace{-0.1em}
	\begin{align*}
		\overline{\binom{j}{k}}= 
		\begin{cases}
			\binom{j}{k},& \text{if~} 0\leq k\leq j-1,\\
			0,	& \text{if~} k<0 \text{ or } k\geq j.
		\end{cases}
	\end{align*}

\begin{thm}\label{thm:main3}
Let $B_n$ be a bouquet with $n \geq 3$ edges, and suppose its intersection graph is $K_{i,j,k}$, where either $k = 0$ or $i + j \geq 3$ with $k \neq 0$.  
Then the partial Petrial polynomial is given by
\begin{align*}
	^{\partial}{\varepsilon_{B_n}^{\times}}(z)= 
	\begin{cases}
		z^{n}+\sum\limits_{d=2}^{n}g_dz^d,
		& \text{if~} n_o=0,\\
		z^{n+1-n_o}+\sum\limits_{d=2}^{n}g_dz^d,
		& \text{if~} n_{o}\geq1,
	\end{cases}
\end{align*}
\noindent where each coefficient $g_d$ is expressed explicitly in terms of $i, j, k$ as
\[\small{
	\begin{aligned}
		g_d =\;
		&\overline{\binom{i}{d-2\left\lceil \frac{j + k}{2} \right\rceil}} +
		\overline{\binom{j}{d - 2\left\lceil \frac{i + k}{2} \right\rceil} }+ 
		\overline{\binom{k}{d - 2\left\lceil \frac{i + j}{2} \right\rceil}}
		+\sum\limits_{a=0}^{i-1}
		\binom{i}{a}\overline{\binom{j}{d - 2 - k - a}}\\
		&+ \sum\limits_{a=0}^{i-1}\binom{i}{a}
		\overline{\binom{k}{d - 2 - j - a}} + 
		\sum\limits_{a = 0}^{i} \sum\limits_{b=0}^{j-1}
		\binom{i}{a} \binom{j}{b}~\overline{\binom{k}{d - 2 - a - b}}
\end{aligned}}
\]
and $n_o$ is the number of odd elements among $i, j$ and $k$.
\end{thm}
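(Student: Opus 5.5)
The plan is to work entirely with the matrix form of Lemma~\ref{rankofpoly_Petrial}: $^{\partial}\varepsilon^{\times}_{B_n}(z)=\sum_{A\subseteq\mathbb{Z}_n} z^{\text{rank}(M+\bm{D}_{n,A})_{\mathbb{Z}_2}}$, where $M=\text{adj}(K_{i,j,k})$. Order the vertices by parts $P_1,P_2,P_3$ of sizes $i,j,k$. Then $M=\bm{J}_{n,n}-\text{diag}(\bm{J}_{i,i},\bm{J}_{j,j},\bm{J}_{k,k})$. For $P\in\{P_1,P_2,P_3\}$ put $u_P=\mathbf{1}_{V\setminus P}$. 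Every row of $M+\bm{D}_{n,A}$ indexed by $v\in P$ equals $u_P$ if $v\notin A$, and $u_P+e_v$ if $v\in A$. Hence the rank depends only on the pattern of which parts are \emph{full} ($P\subseteq A$), which are \emph{empty} ($P\cap A=\emptyset$), and on $a_P=|A\cap P|$. I will split the sum over $A$ by the set of full parts and count each class with products of binomial coefficients.

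\textbf{Step 1 (at most one full part).} Let $P_{\ast}$ be a non-full part and pick $w\in P_{\ast}\setminus A$. Then for $v\in A\cap P_{\ast}$ the row $u_{P_{\ast}}$ lies in the span, so $e_v$ does too. For a full part $P$ there is no such row $u_P$. I will show that, modulo the span of the rows, each full part contributes exactly $|P|$, each non-full part contributes $a_P$, and the vectors $u_P$ of the non-full parts contribute rank $2$ altogether. The key identity here is $u_{P_1}+u_{P_2}+u_{P_3}=0$, which drops the rank from $3$ to $2$ when all three parts are non-full; when exactly one part is full, the two remaining $u_P$ are independent, again giving $2$. So the rank is $|A|+2$ in these cases. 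Counting subsets $A$ with $|A|=d-2$ in which no part is full, or exactly one part is full, should produce the last three summands of $g_d$; the restrictions $a\le i-1$, $b\le j-1$ and the truncated binomials $\overline{\binom{\cdot}{\cdot}}$ encode exactly the ``not full'' conditions. I will check carefully that the terms in which part $i$ is full are separated from those in which part $j$ or $k$ is full, so that nothing is double counted.

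\textbf{Step 2 (exactly two full parts).} Say $P_1$ and $P_2$ are full, and $P_3$ is not. The rows indexed by $P_1\cup P_2$ form the block $\begin{pmatrix}\bm{I}_{i,i}&\bm{J}_{i,j}\\ \bm{J}_{j,i}&\bm{I}_{j,j}\end{pmatrix}$, with an all-ones block in the $P_3$ columns appended. The rows of $P_3$ are $u_{P_3}$, possibly plus some $e_v$. I will eliminate the $P_3$ contribution, as in Step 1, and reduce to computing the rank over $\mathbb{Z}_2$ of the matrix $\bm{J}-\text{diag}(\bm{J}_{i,i}+\bm{I}_{i,i},\bm{J}_{j,j}+\bm{I}_{j,j})$ augmented by the $u_{P_3}$ row. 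Using the parity computation of Lemma~\ref{rankofK_n}, applied blockwise, I expect the total rank to be $a_3+2\lceil (i+j)/2\rceil$. This is exactly the source of the terms $\overline{\binom{k}{d-2\lceil (i+j)/2\rceil}}$, and symmetrically of the other two ceiling terms.

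\textbf{Step 3 (all parts full, $A=\mathbb{Z}_n$).} Here one must compute $\text{rank}(\bm{J}-\text{diag}(\bm{J}_{i,i}+\bm{I}_{i,i},\bm{J}_{j,j}+\bm{I}_{j,j},\bm{J}_{k,k}+\bm{I}_{k,k}))_{\mathbb{Z}_2}$. I plan to compute this by restricting to the invariant subspace spanned by the block indicator vectors: on vectors orthogonal to every $\mathbf{1}_{P}$ the matrix acts as the identity, and on the span of the indicators it becomes a $3\times 3$ matrix whose entries depend on the parities of $i,j,k$. This should yield rank $n$ when $n_o=0$ and $n+1-n_o$ otherwise, which accounts for the isolated monomial in the statement. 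The case $k=0$ is handled by the same argument with two blocks.

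I expect the main obstacle to be Steps 2 and 3. The naive count ``$|A|$ plus the number of independent $u_P$'' fails there, because the full parts have no pure $u_P$ row, so the rank genuinely depends on parities. I will first verify these formulas on small cases, for instance $K_{1,1,1}$ and $K_{2,1}$, before assembling the final binomial bookkeeping into $g_d$.
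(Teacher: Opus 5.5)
Your case split (by which parts $A$ contains completely) matches the paper's proof, which only splits your Step 1 into ``one part full'' and ``no part full''. Your target ranks $|A|+2$, $a_3+2\lceil (i+j)/2\rceil$ and $n$ or $n+1-n_o$ are also the paper's, and your Step 1 argument via $u_{P_1}+u_{P_2}+u_{P_3}=0$ is correct. The genuine problem is Step 2: the reduction you describe would not give the value you expect.

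\textbf{Step 2.} You pass to $\bm{J}-\text{diag}(\bm{J}_{i,i}+\bm{I}_{i,i},\bm{J}_{j,j}+\bm{I}_{j,j})$, which has only $i+j$ columns, plus the row $u_{P_3}$. That discards every column indexed by $P_3$. Only the columns of $A\cap P_3$ can be cleared, using the rows $e_v$. The columns of $T=P_3\setminus A\neq\emptyset$ coincide with one another, but one of them must be kept. It is exactly what separates the rows of $P_1\cup P_2$ (equal to $1$ on $T$) from $u_{P_3}$ (equal to $0$ on $T$). Without it you get rank $i+j$, $i+j$, $i+j-1$ when $(i,j)$ are both even, of mixed parity, or both odd, instead of the correct $i+j$, $i+j+1$, $i+j$. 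For example, for $K_{1,2,1}$ with $A=P_1\cup P_2$, the rows of $M+\bm{D}_{n,A}$ are $(1,1,1,1),(1,1,0,1),(1,0,1,1),(1,1,1,0)$, of rank $4=2\lceil 3/2\rceil$, while your truncated matrix has rank $3$. The paper's reduction keeps the $P_3$ block for this reason: its $\bm{D}_{k,k'}$ with $k'=\overline{c}+1$ carries the extra unit. To fix it, keep one $T$ column and quotient by the differences $e_v+e_{v'}$ within $P_1$ and within $P_2$, which lie in the row space. This gives rank $a_3+(i+j-2)+\text{rank}\begin{pmatrix}1&j&1\\ i&1&1\\ i&j&0\end{pmatrix}$ with entries mod $2$. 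The $3\times 3$ rank is $2,3,2$ in the three parity cases, which yields $a_3+2\lceil (i+j)/2\rceil$.

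\textbf{Step 3.} This also needs repair over $\mathbb{Z}_2$. $\text{span}\{\mathbf{1}_P\}$ and its orthogonal complement need not be complementary: if $i,j,k$ are all even, the span lies inside its own orthogonal complement. So you cannot add the ranks on the two pieces. Argue instead that $Mx$ is constant on each part, so $(M+\bm{I})x=0$ forces $x=Mx\in\text{span}\{\mathbf{1}_P\}$. The nullity of $M+\bm{I}$ is then that of $\begin{pmatrix}1&j&k\\ i&1&k\\ i&j&1\end{pmatrix}$ mod $2$, whose rank is $3,3,2,1$ for $n_o=0,1,2,3$. This gives $n$ and $n+1-n_o$.

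\textbf{The case $k=0$.} When exactly one of the two parts is full, say $P_1$, the identity from Step 1 is unavailable, and the rank is $2\lceil i/2\rceil+|A\cap P_2|$, not $|A|+2$. Treat this as Step 2 with the empty third part regarded as full. That is what the term $\overline{\binom{j}{d-2\lceil (i+k)/2\rceil}}$ records at $k=0$.
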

\begin{proof}
To determine the partial Petrial polynomial of $B_n$, we first evaluate the rank of the matrix  $\text{adj}(I(B_n))+\bm{D}_{n,X}$ over $\mathbb{GF}(2)$ for any $X\subseteq \mathbb{Z}_n$ by Lemma \ref{rankofpoly_Petrial}. Since $I{(B_n)}=K_{i,j,k}$, the vertex set is partitioned into subsets  $A=\{u_1,u_1,\cdots,u_i\}$,  $B=\{v_1,v_2,\cdots,v_j\}$ and $C=\{w_1,w_2,\cdots,w_k\}$. The adjacency matrix $\text{adj}(I(B_n))$ depends on the order of vertices with its first $i$, next $j$, and final $k$ rows/columns corresponding to $A$, $B$, and $C$, respectively, hence $\text{adj}(I(B_n))$ is given by
\vspace{-0.3em}
\[\text{adj}(I(B_n)) = 
\begin{bmatrix}
	\bm{O}_{i,i} & \bm{J}_{i,j} & \bm{J}_{i,k} \\
	\bm{J}_{j,i} & \bm{O}_{j,j} & \bm{J}_{j,k} \\
	\bm{J}_{k,i} & \bm{J}_{k,j} & \bm{O}_{k,k} \\
\end{bmatrix}.
\] 

\vspace{-0.3em}
\noindent We use $T_{\text{index}}$ to denote the set of numbers of rows in the adjacency matrix that correspond to the vertices in $T$ for any $T\subseteq A\cup B\cup C$. For any $X\subseteq \mathbb{Z}_n$, we divide $X$ into four cases based on whether $X$ fully contains exactly three, two, one, or none of the sets $A_{\text{index}}$, $B_{\text{index}}$, and $C_{\text{index}}$. 

We set $\overline{a}=|X\cap A_{\text{index}}|,\overline{b}=|X\cap B_{\text{index}}|$ and $\overline{c}=|X\cap C_{\text{index}}|$. 
Let $\mathbb{X}$ be the family of some subsets of $\mathbb{Z}_n$ and let  $P_{\mathbb{X}}=\sum\limits_{X\in\mathbb{X}} z^{{\text{rank}(\text{adj}(I(B_n))+\bm{D}_{n,X})}_{\mathbb{Z}_2}}$.

\noindent\textbf{Case 1: $\bm{X}$ contains all of $\bm{A_{\text{index}},B_{\text{index}}}$ and $\bm {C_{\text{index}}}$.}

In this case, 
one has $X=(A\cup B\cup C)_{\text{index}}$. Regardless of whether there is an odd number among $i, j, k$ (if so, without loss of generality, assume $i$ is odd), one has
\vspace{-0.3em}
\[\text{adj}(I(B_n))+\bm{D}_{n,X}=
\begin{bmatrix}
	\bm{I}_{i,i} & \bm{J}_{i,j} & \bm{J}_{i,k} \\
	\bm{J}_{j,i} & \bm{I}_{j,j} & \bm{J}_{j,k} \\
	\bm{J}_{k,i} & \bm{J}_{k,j} & \bm{I}_{k,k} \\
\end{bmatrix}
\rightarrow
\begin{bmatrix}
	\bm{J}_{1,1} & \bm{J}_{1,i-1} & \bm{J}_{1,j} & \bm{J}_{1,k} \\
	\bm{J}_{i-1,1} & \bm{(J-I)}_{i-1,i-1} & \bm{O}_{i-1,j} & \bm{
		O}_{i-1,k} \\
	\bm{O}_{j,1}& \bm{O}_{j,i-1} & \bm{(J-I)}_{j,j} & \bm{O}_{j,k} \\
	\bm{O}_{k,1}& \bm{O}_{k,i-1} & \bm{O}_{k,j} & \bm{(J-I)}_{k,k} \\
\end{bmatrix},
\]

\vspace{-0.3em}
\noindent by some elementary transformations. 
\vspace{-0.3em}
According to Lemma \ref{rankofK_n}, one has
\vspace{-0.2em}
\begin{align*}
	\text{rank}(\text{adj}(I(B_n))+\bm{D}_{n,X})_{\mathbb{Z}_2}= 
	\begin{cases}
		{i+j+k}, & \text{if~} n_{o}=0,\\
		i+j+k+1-n_{o}, & \text{if~} n_{o}\geq1.
	\end{cases}  
\end{align*}

\vspace{-0.6em}
\noindent Let $\mathbb{X}_1=\{(A\cup B\cup C)_{\text{index}}\}$, we have 
\vspace{-0.6em}
\begin{align}\label{main3_eq1}
	P_{\mathbb{X}_1}=\begin{cases}
		z^{n}, & \text{if~} n_{o}=0,\\
		z^{n+1-n_{o}}, & \text{if~} n_{o}\geq1.
	\end{cases}  
\end{align}

\vspace{-0.3em}
\noindent\textbf{Case 2: $\bm{X}$ contains exactly two of $\bm{A_{\text{index}},B_{\text{index}}}$ and $\bm{C_{\text{index}}}$}.

Since $X$ fully contains exactly two of $A_{\text{index}},B_{\text{index}}$ and $C_{\text{index}}$, one has that $(A\cup B\cup C\backslash Y)_{\text{index}}\subseteq X\subsetneqq (A\cup B\cup C)_{\text{index}}$, where  $Y\in\{A,B,C\}$.~
Without loss of generality, consider $X$ such that~$(A\cup B)_{\text{index}} \subseteq X \subsetneq (A \cup B \cup C)_{\text{index}}$,~so $X$ misses at least one element of $C_{\text{index}}$.

We apply elementary transformations to the full rows and columns indexed by $C_{\text{index}}$ in matrix $\text{adj}(I(B_n))+\bm{D}_{n,X}$. Specifically, we perform simultaneous swaps between a row/column whose diagonal entry is $1$ and one whose diagonal entry is $0$. This is done until the first $|X\cap C_{\text{index}}|$ diagonal elements are $1$ and the remaining are $0$. 
Thus the modified adjacency matrix is then 
\vspace{-0.3em}
\[\text{adj}(I(B_n))+\bm{D}_{n,X}=
\begin{bmatrix}
	\bm{I}_{i,i} & \bm{J}_{i,j} & \bm{J}_{i,k} \\
	\bm{J}_{j,i} & \bm{I}_{j,j} & \bm{J}_{j,k} \\
	\bm{J}_{k,i} & \bm{J}_{k,j} & \bm{D}_{k,X\cap C_{\text{index}}} \\
\end{bmatrix}
\rightarrow 
 \begin{bmatrix}
	\bm{I}_{i,i} & \bm{J}_{i,j} & \bm{J}_{i,k} \\
	\bm{J}_{j,i} & \bm{I}_{j,j} & \bm{J}_{j,k} \\
	\bm{J}_{k,i} & \bm{J}_{k,j} & 
	\bm{D}_{k,\overline{c}} \\
\end{bmatrix}.\]

\vspace{-0.3em}
\noindent Regardless of whether there is an odd number among $i,j$ (if so, without loss of generality, assume $i$ is odd),  through some elementary transformations, we have that
\vspace{-0.6em}
\[\text{adj}(I(B_n))+\bm{D}_{n,X} \rightarrow 
\begin{bmatrix}
\bm{I}_{i,i} & \bm{O}_{i,j} & \bm{O}_{i,k}\\
\bm{O}_{j,i} & \bm{(J-I)}_{j,j} & \bm{O}_{j,k}\\
\bm{O}_{k,i} & \bm{O}_{k,j} & \bm{D}_{k,k^{\prime}} \\
\end{bmatrix},
\]

\vspace{-0.3em}
\noindent where $k^{\prime}=\overline{c}$ if $i,j$ are even; and $k^{\prime}=\overline{c}+1$, otherwise. By Lemma \ref{rankofK_n}, it follows that

\vspace{-0.8em}
\[\text{rank}(\text{adj}(I(B_n))+\bm{D}_{n,X})_{\mathbb{Z}_2}=2\left\lceil\frac{i+j}{2}\right\rceil +|X\cap C_{\text{index}}|.\]

\vspace{-0.3em}
\noindent Summing over all possible $X$ in this subclass, the generating function is
\vspace{-0.6em}
\[\sum_{(A\cup B)_{\text{index}} \subseteq X \subsetneqq (A\cup B\cup C)_{\text{index}}} z^{{\text{rank}(\text{adj}(I(B_n))+\bm{D}_{n,X})}_{\mathbb{Z}_2}}=\sum_{c=0}^{k-1}\binom{k}{c} z^{2\left\lceil\frac{i+j}{2}\right\rceil+c}.\] 

\vspace{-0.6em}
\noindent By symmetry, we have the analogous sums for the other two cases:
\vspace{-0.6em}
\begin{align*}
\sum_{a=0}^{i-1} \binom{i}{a} z^{2\left\lceil\frac{j+k}{2}\right\rceil + a}; \quad
\sum_{b=0}^{j-1} \binom{j}{b} z^{2\left\lceil\frac{i+k}{2}\right\rceil + b}; \quad
\sum_{c=0}^{k-1} \binom{k}{c} z^{2\left\lceil\frac{i+j}{2}\right\rceil + c}.
\end{align*}

\vspace{-0.6em}
\noindent 
Let $\mathbb{X}_2$ be the set of all subsets $X$ of $\mathbb{Z}_n$ that satisfy Case $2$. This yields the following sum: 
\vspace{-0.6em}
\begin{align*}
	P_{\mathbb{X}_2}=\sum_{a=0}^{i-1}\binom{i}{a} z^{2\left\lceil\frac{j+k}{2}\right\rceil +a}+\sum_{b=0}^{j-1} \binom{j}{b} z^{2\left\lceil\frac{i+k}{2}\right\rceil +b} +\sum_{c=0}^{k-1} \binom{k}{c}z^{2\left\lceil\frac{i+j}{2}\right\rceil +c}.
\end{align*}

\vspace{-0.6em}
\noindent After simplification, we can sum the above expression over different degrees as follows:
\vspace{-0.3em}
\begin{align}\label{main3_eq2}
P_{\mathbb{X}_2}=\sum\limits_{d = 2}^{n}\left[\overline{\binom{i}{d-2\left\lceil\frac{j + k}{2} \right\rceil}} + \overline{\binom{j}{d - 2\left\lceil \frac{i + k}{2}\right\rceil}} + \overline{\binom{k}{d - 2\left\lceil \frac{i + j}{2} \right\rceil}}\right]z^{d}.
\end{align}
\noindent\textbf{Case 3: $\bm{X}$ contains exactly one of $\bm{A_{\text{index}},B_{\text{index}}, C_{\text{index}}}$.}

In this case, $X$ fully contains exactly one of $A_{\text{index}}$, $B_{\text{index}}$, or $C_{\text{index}}$. More precisely, for $Y \in \{A_{\text{index}}, B_{\text{index}}, C_{\text{index}}\}$ and $Z$ the remaining two sets, $Y\subseteq X$ and $Z\not\subseteq X$
. Without loss of generality, consider $A_{\text{index}}\subseteq X$ but $B_{\text{index}}\not\subseteq X$ and $C_{\text{index}}\not\subseteq X$.
After some elementary transformations, we have 
\vspace{-0.6em}
\[\text{adj}(I(B_n))+\bm{D}_{n,X}
\rightarrow 
\begin{bmatrix}
\bm{I}_{i,i} & \bm{J}_{i,j} & \bm{J}_{i,k} \\
\bm{J}_{j,i} & \bm{D}_{j,\overline{b}} & \bm{J}_{j,k} \\
\bm{J}_{k,i} & \bm{J}_{k,j} & \bm{D}_{k,\overline{c}}
\end{bmatrix}
\rightarrow	\bm{D}_{n,i+\overline{b}+\overline{c}+2}.
\]

\vspace{-0.6em}
\noindent Thus the rank of this matrix over $\mathbb{Z}_2$ is
\vspace{-0.6em}
$$\text{rank}(\text{adj}(I(B_n))+\bm{D}_{n,X})_{\mathbb{Z}_2}=i+|X\cap B_{\text{index}}|+|X\cap C_{\text{index}}|+2.$$

\vspace{-0.5em}
\noindent By symmetry, the same formula holds when $X$ fully contains exactly one of $B_{\text{index}}$ or $C_{\text{index}}$. Let $\mathbb{X}_3$ be the set of all subsets $X$ of $\mathbb{Z}_n$ that satisfy Case $3$. Thus, this yields the following sum: 
\vspace{-0.6em}
\begin{equation}\label{main3_eq3}
	\begin{aligned}
		\hspace{-0.8em}P_{\mathbb{X}_3}&= \sum\limits_{a=0}^{i-1}\sum\limits_{b=0}^{j-1} \binom{i}{a}\binom{j}{b}z^{a+b+k+2} +  \sum_{a=0}\limits^{i-1}\sum\limits_{c=0}^{k-1} \binom{i}{a}\binom{k}{c} z^{a+j+c+2} + \sum\limits_{b=0}^{j-1}\sum\limits_{c=0}^{k-1}\binom{j}{b}\binom{k}{c}z^{i+b+c+2}\\
		&=\sum\limits_{d=2}^{n}\left[\sum\limits_{a=0}^{i-1}
		\binom{i}{a}\left(\overline{\binom{j}{d - 2 - k - a}} +\overline{\binom{k}{d - 2 - j - a}}\right)+
		\sum\limits_{b=0}^{j-1}\binom{j}{b}\overline{\binom{k}{d - 2 - i - b}}\right]z^{d}.
	\end{aligned}
\end{equation}

\noindent\textbf{Case 4: $\bm{X}$ contains none of $\bm{A_{\text{index}},B_{\text{index}}, C_{\text{index}}}$.}

In this case, $X$ does not fully contain any of the sets $A_{\text{index}}$, $B_{\text{index}}$, and $C_{\text{index}}$; that is, $A_{\text{index}} \not\subseteq X$, $B_{\text{index}}\not\subseteq X$, and $C_{\text{index}}\not\subseteq X$. 
After some elementary transformations, we have that
\vspace{-0.3em}
\[\text{adj}(I(B_n))+\bm{D}_{n,X}\rightarrow
\begin{bmatrix}
	\bm{D}_{i,\overline{a}} & \bm{J}_{i,j} & \bm{J}_{i,k} \\
	\bm{J}_{j,i} & \bm{D}_{j,\overline{b}} & \bm{J}_{j,k} \\
	\bm{J}_{k,i} & \bm{J}_{k,j} & \bm{D}_{k,\overline{c}} \\
\end{bmatrix}
\rightarrow
\bm{D}_{n,\overline{a}+\overline{b}+\overline{c}+2} .
\]

\vspace{-0.6em}
\noindent Thus we obtain that $\text{rank}(\text{adj}(I(B_n))+\bm{D}_{n,X})_{\mathbb{Z}_2}=|X\cap A_{\text{index}}|+|X\cap B_{\text{index}}|+|X\cap C_{\text{index}}|+2.$
%
Let $\mathbb{X}_4$ be the set of $X$ that satisfies Case $4$, one has 
\vspace{-0.6em}
\begin{align}\label{main3_eq4}
P_{\mathbb{X}_4}=\sum_{a=0}^{i-1}\sum_{b=0}^{j-1}\sum_{c=0}^{k-1} \binom{i}{a}\binom{j}{b} \binom{k}{c} z^{a+b+c+2}=
\sum\limits_{d = 2}^{n}\sum\limits_{a = 0}^{i-1}\sum\limits_{b=0}^{j-1}
\binom{i}{a}\binom{j}{b}\overline{\binom{k}{d - 2 - a - b}}z^d.
\end{align}

\vspace{-0.3em}
Summarizing formulates (\ref{main3_eq1})-(\ref{main3_eq4}), the partial Petrial polynomial is
\vspace{-0.8em} $$^{\partial}{\varepsilon_{B_n}^{\times}}(z)=\sum\limits_{X\in\mathbb{Z}_n} z^{{\text{rank}(\text{adj}(I(B_n))+\bm{D}_{n,X})}_{\mathbb{Z}_2}}=\sum\limits_{l=1}^{4}\sum\limits_{X\in \mathbb{X}_l} z^{{\text{rank}(\text{adj}(I(B_n))+\bm{D}_{n,X})}_{\mathbb{Z}_2}}= P_{\mathbb{X}_1}+P_{\mathbb{X}_2}+P_{\mathbb{X}_3}+P_{\mathbb{X}_4}.$$

\vspace{-0.6em}
\noindent Sum up the coefficients of $z$ to the power of $d$, $2\leq d\leq n$. This completes the proof.
\end{proof}
\vspace{-1em}
\begin{coro}
Let $B_{i+1}$ be a bouquet with $i+1$ edges such that $I(B_{i+1})=K_{i,1}$ and $i\geq 2$. 
Then 
\vspace{-0.6em} \[^{\partial}{\varepsilon_{B_n}^{\times}}(z)= z^{i}+z^{i+1}+2z^2\left[(1+z)^{i}-z^i\right].\]
\end{coro}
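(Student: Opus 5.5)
We will compute the polynomial directly from Lemma~\ref{rankofpoly_Petrial} rather than specializing the general coefficient formula of Theorem~\ref{thm:main3}. Specializing would also work, since $K_{i,1}=K_{i,1,0}$ with $k=0$ and $n=i+1\geq 3$, but tracking the $\overline{\binom{\cdot}{\cdot}}$ conventions is more error-prone than a direct rank count.

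Order the vertices of $I(B_{i+1})=K_{i,1}$ so that the $i$ leaves come first and the centre is last. Then
\[
\text{adj}(I(B_{i+1}))+\bm{D}_{n,X}=\begin{bmatrix}\bm{D}_{i,S} & \bm{J}_{i,1}\\ \bm{J}_{1,i} & \delta\end{bmatrix},
\]
where $S=X\cap\{1,\dots,i\}$ and $\delta\in\{0,1\}$ records whether the centre lies in $X$. After simultaneous row/column permutations inside the first block we may take $S=\{1,\dots,a\}$ with $a=|S|$. We then split the computation according to whether $a=i$ or $a<i$.

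\textbf{Case $a=i$.} The matrix is $\begin{bmatrix}\bm{I}_{i,i} & \bm{J}_{i,1}\\ \bm{J}_{1,i} & \delta\end{bmatrix}$. Subtracting the sum of the first $i$ rows from the last row leaves the Schur complement $\delta-i \pmod 2$. Hence the rank is $i+((\delta+i)\bmod 2)$. As $\delta$ ranges over $\{0,1\}$, exactly one of the two choices gives rank $i$ and the other gives rank $i+1$. This case therefore contributes $z^i+z^{i+1}$.

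\textbf{Case $a<i$.} At least one leaf row is $(0,\dots,0,1)=e_n^{T}$, and all such unmarked leaf rows coincide, so together they contribute rank exactly $1$. Using $e_n^{T}$ we clear the last-column entries of the $a$ marked leaf rows, which become $e_1^T,\dots,e_a^T$. The last row has a $1$ in some unmarked leaf column (column index $>a$, $<n$), so it is independent of $e_1,\dots,e_a,e_n$ regardless of $\delta$. Thus the rank is $a+2$ for both values of $\delta$. Summing over the $\binom{i}{a}$ choices of $S$ and the two choices of $\delta$ gives
\[
2\sum_{a=0}^{i-1}\binom{i}{a}z^{a+2}=2z^2\left[(1+z)^i-z^i\right].
\]

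Adding the two cases yields the stated formula. The only delicate step is the rank count when $a<i$: one must check that the last row stays independent after the clearing step, i.e.\ that an unmarked leaf column exists, which is exactly the hypothesis $a<i$. We would verify this by exhibiting the explicit $(a+2)$ pivots $e_1,\dots,e_a,e_n$ and the last row, and noting that every remaining row duplicates $e_n^{T}$.
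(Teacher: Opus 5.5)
Your argument is correct. Both rank counts are right: rank $i+((\delta+i)\bmod 2)$ when every leaf index lies in $X$, and rank $a+2$ for both values of $\delta$ when $a<i$. Together they give $z^i+z^{i+1}+2z^2\left[(1+z)^i-z^i\right]$.

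This is essentially the paper's route. The paper uses the rank reformulation of Lemma~\ref{rankofpoly_Petrial} with the same case split as in the proof of Theorem~\ref{thm:main3}, on whether $X$ contains a whole part; you carry it out directly for $K_{i,1}$ instead of substituting $j=1$, $k=0$ into the general formula. That substitution gives the same polynomial: $g_d=2\binom{i}{d-2}$ for $2\le d\le i+1$, plus $1$ at $d=2\lceil i/2\rceil$, and the term $z^{n+1-n_o}$ supplies the other of $z^i, z^{i+1}$.
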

\vspace{-1em}
\begin{coro}
	Let $B_{2i}$ be a bouquet with $2i$ edges such that $I(B_{2i})=K_{i,i}$ and $i\geq 2$. 
	Then 
	\vspace{-0.3em} \begin{align*}
		^{\partial}{\varepsilon_{B_n}^{\times}}(z)= 
		\begin{cases}
			z^{2i-1}+2\left[(1+z)^{i}-z^i\right]z^{i+1} + z^2\left[(1+z)^{i}-z^i\right]^2, 
			& \text{if~} i \text{ is odd};\\
			z^{2i}+2\left[(1+z)^{i}-z^i\right]z^{i} + z^2\left[(1+z)^{i}-z^i\right]^2, & \text{else}.\\
		\end{cases}  
	\end{align*}
\end{coro}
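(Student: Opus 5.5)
The plan is to compute the polynomial directly from Lemma \ref{rankofpoly_Petrial} rather than substituting $j=i$, $k=0$ into the coefficients $g_d$ of Theorem \ref{thm:main3}. For $k=0$ the conventions on $\overline{\binom{k}{\cdot}}$ make that bookkeeping awkward, although the underlying case analysis is identical.

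Order the vertices of $K_{i,i}$ as the two parts $A$ and $B$. For $X\subseteq\mathbb{Z}_{2i}$ write $\overline{a}=|X\cap A_{\text{index}}|$ and $\overline{b}=|X\cap B_{\text{index}}|$. After simultaneous row and column permutations inside each part,
\[
M_X=\text{adj}(I(B_{2i}))+\bm{D}_{2i,X}\sim\begin{bmatrix}\bm{D}_{i,\overline{a}} & \bm{J}_{i,i}\\ \bm{J}_{i,i} & \bm{D}_{i,\overline{b}}\end{bmatrix}.
\]
I will split into three classes according to how many of $A_{\text{index}},B_{\text{index}}$ lie entirely in $X$. These are the $k=0$ analogues of Cases 1, 2 and 3/4 in the proof of Theorem \ref{thm:main3}.

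\textbf{Both parts contained in $X$.} Here $M_X=\begin{bmatrix}\bm{I}&\bm{J}\\ \bm{J}&\bm{I}\end{bmatrix}$. Eliminate the lower-left block using the identity block, so the rank is $i$ plus the rank of the Schur complement $\bm{I}+\bm{J}_{i,i}\bm{J}_{i,i}=\bm{I}+i\bm{J}$ over $\mathbb{Z}_2$. If $i$ is even the complement is $\bm{I}$, giving rank $2i$. If $i$ is odd it is $\bm{J}-\bm{I}$, which has rank $i-1$ by Lemma \ref{rankofK_n}, giving rank $2i-1$. This class contributes $z^{2i}$ or $z^{2i-1}$.

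\textbf{Exactly one part contained in $X$.} Say $A_{\text{index}}\subseteq X$ and $\overline{b}<i$. The same Schur complement step gives rank $i+\text{rank}(\bm{D}_{i,\overline{b}}+i\bm{J})$.
\begin{itemize}
\item If $i$ is even, the complement is $\bm{D}_{i,\overline{b}}$, so the rank is $i+\overline{b}$.
\item If $i$ is odd, it is $\bm{D}_{i,\overline{b}}+\bm{J}$. Since $\overline{b}<i$, some row has zero diagonal and is therefore the all-ones row. Subtracting it from each row with diagonal $1$ leaves $\overline{b}$ distinct unit vectors, so the rank is $\overline{b}+1$ and the total is $i+1+\overline{b}$.
\end{itemize}
Summing over $\overline{b}$ gives $z^{i}\sum_{b=0}^{i-1}\binom{i}{b}z^b=z^i[(1+z)^i-z^i]$ in the even case and $z^{i+1}[(1+z)^i-z^i]$ in the odd case. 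The symmetric choice with $B_{\text{index}}\subseteq X$ doubles this, which accounts for the factor $2$ in the statement.

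\textbf{Neither part contained in $X$.} Both diagonal blocks have at least one zero on the diagonal, so $M_X$ has an all-ones row in each part. Using these two rows to clear the remaining rows reduces $M_X$ to $\overline{a}+\overline{b}$ unit vectors together with two independent rows. The rank is therefore $\overline{a}+\overline{b}+2$, matching Case 4 of Theorem \ref{thm:main3}. The contribution is $z^2\bigl(\sum_{a<i}\binom{i}{a}z^a\bigr)^2=z^2[(1+z)^i-z^i]^2$.

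Adding the three contributions gives the claimed formula. The only delicate point is the exactly-one-part class for odd $i$. There one must check that the rank of $\bm{D}+\bm{J}$ is $\overline{b}+1$ exactly when the diagonal of $\bm{D}$ is not full. This is where the condition that $B_{\text{index}}\not\subseteq X$ is used, and it is what produces the shift from $z^{i}$ to $z^{i+1}$.
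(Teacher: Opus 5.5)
Your proof is correct and is essentially the paper's argument. The corollary is the $j=i$, $k=0$ specialization of Theorem~\ref{thm:main3}, whose proof uses your split by how many parts lie entirely in $X$ (with $k=0$ the paper's Case~4 is empty and its Case~3 is your ``neither'' class), and your Schur-complement computations make its ``elementary transformations'' explicit. Incidentally, substituting into $g_d$ is not actually awkward: under the paper's convention $\overline{\binom{0}{m}}=0$ for every $m$, so all $k$-terms vanish, the rest sums to $2z^{2\lceil i/2\rceil}[(1+z)^i-z^i]+z^2[(1+z)^i-z^i]^2$, and $n_o\in\{0,2\}$ gives the leading $z^{2i}$ or $z^{2i-1}$.
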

\vspace{-1em}
\begin{coro}
	Let $B_{3i}$ be a bouquet with $3i$ edges such that $I(B_{3i})=K_{i,i,i}$ and $i\geq 2$. 
	Then 
	\vspace{-0.3em} 
	\begin{align*}
		^{\partial}{\varepsilon_{B_n}^{\times}}(z)= 
		\begin{cases}
			z^{3i-2}+3z^{2i}\left[(1+z)^{i}-z^i\right] + 3z^{i+2}\left[(1+z)^{i}-z^i\right]^2 +
			z^{2}\left[(1+z)^{i}-z^i\right]^3, 
			& \text{if~} i\text{ is odd};\\
			z^{3i}+3z^{2i}\left[(1+z)^{i}-z^i\right] + 3z^{i+2}\left[(1+z)^{i}-z^i\right]^2 +
			z^{2}\left[(1+z)^{i}-z^i\right]^3, & \text{else}.\\
		\end{cases}  
	\end{align*}
\end{coro}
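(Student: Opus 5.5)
The plan is to specialize the four-case decomposition in the proof of Theorem~\ref{thm:main3} to $i=j=k$, rather than the final coefficient formula $g_d$. Each class $P_{\mathbb{X}_l}$ then factors into powers of the truncated binomial sum $\sum_{a=0}^{i-1}\binom{i}{a}z^a=(1+z)^i-z^i$. The hypotheses of Theorem~\ref{thm:main3} hold: $k=i\neq 0$ and $i+j=2i\geq 4\geq 3$ since $i\geq 2$. So $^{\partial}{\varepsilon_{B_{3i}}^{\times}}(z)=P_{\mathbb{X}_1}+P_{\mathbb{X}_2}+P_{\mathbb{X}_3}+P_{\mathbb{X}_4}$ with the rank formulas established there, and I only need to evaluate each term.

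\textbf{Case 1.} With $i=j=k$, the number of odd parts is $n_o=0$ if $i$ is even and $n_o=3$ if $i$ is odd. By (\ref{main3_eq1}), $P_{\mathbb{X}_1}=z^{3i}$ if $i$ is even, and $P_{\mathbb{X}_1}=z^{3i+1-3}=z^{3i-2}$ if $i$ is odd. This accounts for the parity split in the statement.

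\textbf{Case 2.} Each of the three blocks may be the incompletely contained one. In each case the exponent is $2\lceil 2i/2\rceil+c=2i+c$ with $0\le c\le i-1$. Hence
$$P_{\mathbb{X}_2}=3z^{2i}\sum_{c=0}^{i-1}\binom{i}{c}z^c=3z^{2i}\left[(1+z)^i-z^i\right].$$

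\textbf{Cases 3 and 4.} In Case~3 there are three choices of the fully contained block. The rank is $i+\overline{b}+\overline{c}+2$ with $\overline{b},\overline{c}\le i-1$ ranging independently. So the double sum in (\ref{main3_eq3}) factors as
$$P_{\mathbb{X}_3}=3z^{i+2}\left[(1+z)^i-z^i\right]^2.$$
Similarly, the triple sum (\ref{main3_eq4}) factors as
$$P_{\mathbb{X}_4}=z^2\left[(1+z)^i-z^i\right]^3.$$
Adding the four contributions gives exactly the two displayed expressions.

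The only step needing care is confirming that the rank formulas of Theorem~\ref{thm:main3} are valid in the symmetric case. In particular, the Case~2 reduction used "without loss of generality $i$ is odd." When $i=j$ are both odd, the block $\bm{(J-I)}_{j,j}$ has rank $j-1$ and $k'=\overline{c}+1$. The total is then $i+(i-1)+\overline{c}+1=2i+\overline{c}$, which matches $2\lceil i\rceil+\overline{c}$. When $i$ is even the total is $i+i+\overline{c}$, which also matches. So the exponent $2i+c$ is correct in both parities, and the rest is direct substitution.
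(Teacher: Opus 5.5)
Your proposal is correct and follows essentially the route the paper intends. The corollary is stated without a separate proof as the case $i=j=k$ of Theorem~\ref{thm:main3}, and evaluating the class sums $P_{\mathbb{X}_1},\dots,P_{\mathbb{X}_4}$ rather than the merged coefficient $g_d$ is exactly what gives the factored powers of $(1+z)^i-z^i$; your parity checks ($n_o\in\{0,3\}$ in Case~1, exponent $2i+c$ in both parities in Case~2) are right, and the coefficients correctly sum to $2^{3i}$.
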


Next, we provide a definition that is used in the following theorem. 
\vspace{-0.6em}
\begin{defi}\label{def:N}
For an ordered triple $(i, j, k)$, define $N_{(i, j, k)}:=n_1 + n_2 + n_3$, where
\vspace{-0.3em}
\[
n_1 =
\begin{cases}
    1, & \text{if } i \text{ is odd}, \\
    0, & \text{if } i \text{ is even},
\end{cases}
\qquad
n_2 =
\begin{cases}
    1, & \text{if } j \text{ is odd}, \\
    0, & \text{if } j \text{ is even},
\end{cases}
\qquad
n_3 =
\begin{cases}
    0, & \text{if } k \text{ is odd}, \\
    1, & \text{if } k \text{ is even}.
\end{cases}
\]
\end{defi}
\begin{thm}\label{thm:main4}
Let $B_n$ be a prime bouquet with $n \geq 3$ edges whose intersection graph is $F_{i, j, k}$, where either ~$j \neq 0$, ~or ~$j = 0$~ and ~$k \ge 2$. Then the partial Petrial polynomial of $B_n$ is
\begin{align*}
	^{\partial}{\varepsilon_{B_n}^{\times}}(z)= 
	\begin{cases}
		z^{n+1-N}+\sum\limits_{d=2}^{n}g_dz^d,
		& \text{if~} N\geq 1, \\ 
		z^{n}+\sum\limits_{d=2}^{n}g_dz^d,
		& \text{if~} N=0,
	\end{cases}  
\end{align*}
\noindent where each coefficient $g_d$ is expressed explicitly in terms of $i, j, k$ as
\[\small{
	\begin{aligned}
		g_d =\;
		&\overline{\binom{i}{d-1-{2\left\lfloor \frac{j+k}{2} \right\rfloor}}} +
		\overline{\binom{j}{d-1-2\left\lfloor \frac{i+k}{2} \right\rfloor}}+
		\overline{\binom{k}{d - 2\left\lceil \frac{i + j}{2} \right\rceil}}+\sum\limits_{a=0}^{i-1}
		\binom{i}{a}\overline{\binom{j}{d - 2 - k - a}}\\
		& + \sum\limits_{a=0}^{i-1}
		\binom{i}{a}
		\overline{\binom{k}{d - 2 - j - a}} + \sum\limits_{a = 0}^{i} \sum\limits_{b=0}^{j-1}
		\binom{i}{a} \binom{j}{b} \overline{\binom{k}{d - 2 - a - b}}.
\end{aligned}}
\]
and $N = N_{(i, j, k)}$ is the number as defined in Definition \emph{\ref{def:N}}. 
\end{thm}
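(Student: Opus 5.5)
We follow the template of the proof of Theorem \ref{thm:main3}. By Lemma \ref{rankofpoly_Petrial}, it suffices to compute $\text{rank}(\text{adj}(I(B_n))+\bm{D}_{n,X})_{\mathbb{Z}_2}$ for every $X\subseteq\mathbb{Z}_n$. Order the vertices of $F_{i,j,k}$ as $A=V_1$, $B=V_2$, $C=V_3$. Then
\[\text{adj}(I(B_n))=\begin{bmatrix}(\bm{J}-\bm{I})_{i,i}&\bm{O}_{i,j}&\bm{J}_{i,k}\\ \bm{O}_{j,i}&(\bm{J}-\bm{I})_{j,j}&\bm{J}_{j,k}\\ \bm{J}_{k,i}&\bm{J}_{k,j}&\bm{O}_{k,k}\end{bmatrix}.\]
We split $X$ into the same four families $\mathbb{X}_1,\dots,\mathbb{X}_4$ as before, according to which of $A_{\text{index}},B_{\text{index}},C_{\text{index}}$ are fully contained in $X$. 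The key observation is that adding the full identity to a $(\bm{J}-\bm{I})$ block produces $\bm{J}$, a rank-one block. Adding a partial diagonal $\bm{D}$ instead leaves $\bm{J}+\bm{D}$ with the complementary pattern of zeros on the diagonal. So the roles of ``full'' and ``not full'' for the blocks $A$ and $B$ are swapped relative to Theorem \ref{thm:main3}. Concretely, $\bm{D}_{n,X}$ restricted to $A$ is $\bm{I}$ exactly when $A_{\text{index}}\subseteq X$, and in that case the $A$-block becomes $\bm{J}_{i,i}$.

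We handle each family in turn.

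\textbf{Case 1} ($X=\mathbb{Z}_n$). The matrix is $\begin{bmatrix}\bm{J}&\bm{O}&\bm{J}\\ \bm{O}&\bm{J}&\bm{J}\\ \bm{J}&\bm{J}&\bm{I}\end{bmatrix}$. Row reduction using one representative row from each of $A$ and $B$ leaves $C$-rows of the form $\bm{I}_k$ plus a multiple of $\bm{J}$, with coefficient governed by the parities of $i$ and $j$. So the rank reduces, via Lemma \ref{rankofK_n}, to a count depending on these parities and on $k$. This should give $n+1-N$ or $n$ as in the statement, with $N$ recording the parity conditions of Definition \ref{def:N}. Note the reversed parity for $k$: the $C$-block carries $\bm{I}$ rather than $\bm{J}-\bm{I}$.

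\textbf{Case 2} (exactly two of the blocks fully contained). There are three subcases. If $A,B$ are full and $C$ is partial, the computation mirrors the one in Theorem \ref{thm:main3}, giving rank $2\lceil (i+j)/2\rceil+\overline{c}$. If $B,C$ are full and $A$ is partial, the $A$-block is $\bm{J}-\bm{I}+\bm{D}_{i,\overline a}$, and we expect rank $\overline a+1+2\lfloor (j+k)/2\rfloor$. This produces the shifted terms $\overline{\binom{i}{d-1-2\lfloor (j+k)/2\rfloor}}$, and the case with $A,C$ full and $B$ partial is symmetric.

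\textbf{Cases 3 and 4.} Here the partial diagonal blocks should allow elimination down to a diagonal matrix of rank $\overline a+\overline b+\overline c+2$, or $i+\overline b+\overline c+2$ when $A$ is full, exactly as in Theorem \ref{thm:main3}. Summing the binomial generating functions then gives the remaining three terms of $g_d$.

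The hardest part is carrying out the explicit Gaussian elimination on $\bm{J}-\bm{I}+\bm{D}_{m,s}$ coupled to the $\bm{J}$ off-diagonal blocks. This is a rank-one perturbation of a diagonal matrix over $\mathbb{GF}(2)$. We plan to write it as $\bm{J}+\bm{D}_{m,\mathbb{Z}_m\setminus S}$, where $S$ is the set of indices in $X$. Subtracting a row with a $0$ diagonal entry (one exists since the block is not full) from all other rows reduces the block to a diagonal part plus a single row and column. The subtle bookkeeping is how this extra row and column combines with those coming from $C$, since that combination determines the $+1$ versus $+2$ shifts and the floor-versus-ceiling parity terms. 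We will check each rank formula on small instances, e.g.\ $F_{1,0,2}$ and $F_{1,1,1}$, against direct face counts before summing. The degenerate case $j=0$ needs care, handled with the stated binomial conventions, so that the $j$-terms vanish.
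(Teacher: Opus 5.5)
There is a genuine gap in the case structure. You keep the case split of Theorem~\ref{thm:main3}, classifying $X$ by which of $A_{\text{index}},B_{\text{index}},C_{\text{index}}$ lie entirely in $X$. The rank values you attach to those families are false for $F_{i,j,k}$.

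\begin{itemize}
\item For $X=\mathbb{Z}_n$ the matrix is $\left[\begin{smallmatrix}\bm{J}&\bm{O}&\bm{J}\\ \bm{O}&\bm{J}&\bm{J}\\ \bm{J}&\bm{J}&\bm{I}\end{smallmatrix}\right]$. All $A$-rows are equal and all $B$-rows are equal, and adding $r_A+r_B$ to the $C$-row $r_c$ leaves $(\bm{0},\bm{0},e_c)$. So for $j\geq 1$ its rank is $k+2$, with no parity dependence and no use of Lemma~\ref{rankofK_n}. For $F_{1,1,1}$ this is $3$, not $n+1-N=2$.
\item For $A,B$ full and $C$ partial, the rank is $\overline{c}+2$, not $2\lceil (i+j)/2\rceil+\overline{c}$. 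For $F_{3,1,1}$ and $X=(A\cup B)_{\text{index}}$ it is $2$, not $4$.
\item Your ``not full'' families are not homogeneous. The $A$-block is $\bm{J}_{i,i}$ plus the diagonal indicator of $A_{\text{index}}\setminus X$. When $X\cap A_{\text{index}}=\emptyset$ this block is $(\bm{J}-\bm{I})_{i,i}$, whose rank depends on parity. Otherwise there is a zero-diagonal row to pivot on, and the block behaves differently. So no single formula in $|X\cap A_{\text{index}}|$ can hold on ``$A$ not full''. For example, for $F_{3,1,1}$ and $X=(B\cup C)_{\text{index}}$ the rank is $5$, while your Case~2 formula $\overline{a}+1+2\lfloor (j+k)/2\rfloor$ gives $3$.
\end{itemize}

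The missing idea is that the correct swap is ``full $\leftrightarrow$ empty'', not ``full $\leftrightarrow$ not full''.

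\begin{itemize}
\item The special subset producing $z^{n+1-N}$ (or $z^n$) is $X=C_{\text{index}}$. There the diagonal blocks are $(\bm{J}-\bm{I})_{i,i}$, $(\bm{J}-\bm{I})_{j,j}$, $\bm{I}_{k,k}$, and Lemma~\ref{rankofK_n} applies.
\item The paper therefore classifies $X$ by three conditions: (C1) $X\cap A_{\text{index}}=\emptyset$, (C2) $X\cap B_{\text{index}}=\emptyset$, (C3) $C_{\text{index}}\subseteq X$.
\item It measures the $A$- and $B$-parts by $i-|X\cap A_{\text{index}}|$ and $j-|X\cap B_{\text{index}}|$.
\end{itemize}

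Every rank formula you predict is exactly the paper's formula for the family obtained under the bijection $X\mapsto X\triangle(A\cup B)_{\text{index}}$. So re-indexing your cases through this bijection repairs the plan. You still must redo the eliminations rather than citing Theorem~\ref{thm:main3} ``exactly'', because the blocks differ. This is where $2\lfloor\cdot\rfloor+1$ replaces $2\lceil\cdot\rceil$ in the subcase with $C$ full and exactly one of $A,B$ empty.
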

\begin{proof}
To determine $^{\partial}{\varepsilon_{B_n}^{\times}}(z)$ of $B_n$,~we first calculate the rank of the matrix  $\text{adj}(I(B_n))+\bm{D}_{n,X}$ over $\mathbb{GF}(2)$ for any $X\subseteq \mathbb{Z}_n$ by Lemma \ref{rankofpoly_Petrial}. Since $I{(B_n)}=F_{i,j,k}$, let~$A=\{u_1,u_1,\cdots,u_i\}$,$B=\{v_1,v_2,\cdots,v_k\}$, $C=\{w_1,w_2,\cdots,w_k\}$ be the partition of $V(F_{i,j,k})$ as defined in Definition \ref{def:F}. The adjacency matrix $\text{adj}(I(B_n))$ depends on the order of vertices with its first $i$, next $j$, and final $k$ rows/columns corresponding to $A$, $B$, and $C$, respectively, hence $\text{adj}(I(B_n))$ is given by
\vspace{-0.3em}
\[\text{adj}(I(B_n)) = 
\begin{bmatrix}
\bm{(J-I)}_{i,i} & \bm{O}_{i,j} & \bm{J}_{i,k} \\
\bm{O}_{j,i} & \bm{(J-I)}_{j,j} & \bm{J}_{j,k}\\ 
\bm{J}_{k,i} & \bm{J}_{k,j} & \bm{O}_{k,k}
\end{bmatrix}.
\]

\vspace{-0.3em}
\noindent For any ~$T \subseteq A\cup B\cup C$, let ~$T_{\text{index}}$~ denote the set of numbers of rows in the adjacency matrix corresponding to the vertices in ~$T$.  We classify all possible subsets ~$X$~ into four cases according to how many of the following conditions ~(C1-C3)~ are satisfied: three, two, one, or none
\vspace{-0.6em}
\begin{itemize}
	\item [\rm (C1).] $X\cap A_{\text{index}}=\emptyset$; \qquad\quad \rm (C2). $X\cap B_{\text{index}}=\emptyset$;  \qquad\quad \rm (C3). $C_{\text{index}}\subseteq X$.
\end{itemize}	

\vspace{-0.6em}
Let $\overline{a}=i-|X\cap A_{\text{index}}|,\overline{b}=j-|X\cap B_{\text{index}}|$ and $\overline{c}=|X\cap C_{\text{index}}|$. 
Let $\mathbb{X}$ be the family of some subsets of $\mathbb{Z}_n$ and  let $P_{\mathbb{X}}=\sum\limits_{X\in\mathbb{X}} z^{{\text{rank}(\text{adj}(I(B_n))+\bm{D}_{n,X})}_{\mathbb{Z}_2}}$.

\vspace{0.2em}
\noindent\textbf{Case 1: $\bm{X}$ satisfies all three of C1, C2, and C3.}
	
Since the set $X$ satisfies C1, C2 and C3, it follows that $X=C_{\text{index}}$. Regardless of whether there is an odd number among $i,j$ (if so, without loss of generality, assume $i$ is odd), through some elementary transformations, we have 
\vspace{-0.3em}
\[\text{adj}(I(B_n))+\bm{D}_{n,X}= 
\begin{bmatrix}
	\bm{(J-I)}_{i,i} & \bm{O}_{i,j} & \bm{J}_{i,k} \\
	\bm{O}_{j,i} & \bm{(J-I)}_{j,j} & \bm{J}_{j,k}\\ 
	\bm{J}_{k,i} & \bm{J}_{k,j} & \bm{I}_{k,k}
\end{bmatrix}
\rightarrow
\begin{bmatrix}
	\bm{Y}_{i,i} & \bm{O}_{i,j} & \bm{O}_{i,k} \\
	\bm{O}_{j,i} & \bm{(J-I)}_{j,j} & \bm{O}_{j,k}\\ 
	\bm{O}_{k,i} & \bm{O}_{k,j} & \bm{I}_{k,k}
\end{bmatrix},
\]

\vspace{-0.3em}
\noindent where $\bm{Y}=\bm{I}$ if both $i$ and $k$ are odd; and $\bm{Y}=\bm{J-I}$, otherwise. We can calculate that
\vspace{-0.3em}
\begin{align*}
	\text{rank}(\text{adj}(I(B_n))+\bm{D}_{n,X})_{\mathbb{Z}_2}=
	\begin{cases}
		n, & \text{if~} N=0,\\
		n+1-N, & \text{if~} N\geq1,
	\end{cases}  
\end{align*}
by Lemma \ref{rankofK_n}. Let $\mathbb{X}_1=C_{\text{index}}$. Hence, we obtain that 
\vspace{-0.6em}
\begin{align}\label{main4_eq1}
	P_{\mathbb{X}_1}=
	\begin{cases}
		z^{n}, & \text{if~} N=0,\\
		z^{n+1-N}, & \text{if~} N\geq1.
	\end{cases}  
\end{align}

\vspace{-0.1em}	
\noindent\textbf{Case 2: $\bm{X}$ satisfies exactly two of C1, C2, C3.}

\vspace{0.1em}
We discuss this case by the following two subcases. Let $\mathbb{X}_2$ and $\mathbb{X}_{2.m}$ be the sets of all $X$ that satisfy Case 2 and Subcase $2.m$, respectively, where $m\in\{1,2\}$.

\vspace{0.3em}
\noindent\textbf{Subcase 2.1: $\bm{X}$ satisfies only C1 and C2.} 
We have that
$X\subsetneqq C_{\text{index}}$. Regardless of whether there is an odd number among $i, j$ (if so, without loss of generality, assume $i$ is odd), by some elementary transformations, one has
\vspace{-0.3em}
\[\text{adj}(I(B_n))+\bm{D}_{{n,X}}
\rightarrow
\begin{bmatrix}
\bm{(J-I)}_{i,i}& \bm{O}_{i,j} & \bm{J}_{i,k} \\
\bm{O}_{j,i}& \bm{(J-I)}_{j,j} & \bm{J}_{j,k}\\ 
\bm{J}_{k,i}& \bm{J}_{k,j} & \bm{D}_{k,\overline{c}}
\end{bmatrix}
\rightarrow
\begin{bmatrix}
\bm{I}_{i,i} & \bm{O}_{i,j} & \bm{O}_{i,k} \\
\bm{O}_{j,i} & \bm{(J-I)}_{j,j} & \bm{O}_{j,k} \\ 
\bm{O}_{k,i} & \bm{O}_{k,j} & \bm{D}_{k,k^{\prime}} \\
\end{bmatrix},
\] 
where 
$k^{\prime}=\overline{c}$ if $i,j$ are both even; and $k^{\prime}=\overline{c}+1$, otherwise. It follows that



\vspace{-0.8em}
\[\text{rank}(\text{adj}(I(B_n))+\bm{D}_{n,X})_{\mathbb{Z}_2} = 2\left\lceil \frac{i+j}{2} \right\rceil+|X\cap C_{\text{index}}|,\] 
by Lemma \ref{rankofK_n}. Therefore, one has
\vspace{-0.6em}
\begin{align}\label{main4_eq2}
P_{\mathbb{X}_{2.1}}=\sum_{c=0}^{k-1}\binom{k}{c} z^{2\left\lceil \frac{i+j}{2} \right\rceil+c} = \sum\limits_{d=2}^{n}\overline{\binom{k}{d - 2\left\lceil \frac{i + j}{2} \right\rceil}}z^d.
\end{align}

\vspace{-0.3em}
\noindent\textbf{Subcase 2.2: $\bm{X}$ satisfies only C3 and one of C1 or C2.} 
If $X$ satisfies only C3 and C1, we have that
$C_{\text{index}}\subsetneqq X \subseteq B_{\text{index}}\cup C_{\text{index}}$, so $X$ contains at least one element of $B_{\text{index}}$. 

We apply elementary transformations to the full rows and columns indexed by $B_{\text{index}}$ in matrix $\text{adj}(I(B_n))+\bm{D}_{n,X}$. Specifically, we perform simultaneous swaps between a row/column whose diagonal entry is $0$ and one whose diagonal entry is $1$. This is done until the first $j-|X\cap B_{\text{index}}|$ diagonal elements are $0$ and the remaining are $1$. Thus the modified adjacency matrix is then
\[\text{adj}(I(B_n))+\bm{D}_{{n,X}}
=
\begin{bmatrix}
	\bm{J}_{i,i}-\bm{I}_{i,i} \!\!&\!\! \bm{O}_{i,j} \!\!&\!\! \bm{J}_{i,k} \\
	\bm{O}_{j,i} \!\!&\!\! \bm{J}_{j,j}-\bm{D}_{j,B_{\text{index}}\backslash (X\cap B_{\text{index}})} \!\!&\!\! \bm{J}_{j,k}\\ 
	\bm{J}_{k,i} \!\!&\!\! \bm{J}_{k,j} \!\!&\!\! \bm{I}_{k,k}
\end{bmatrix}
\rightarrow
\begin{bmatrix}
	\bm{J}_{i,i}-\bm{I}_{i,i} \!\!&\!\! \bm{O}_{i,j} \!\!&\!\! \bm{J}_{i,k} \\
	\bm{O}_{j,i} \!\!&\!\! \bm{J}_{j,j}-\bm{D}_{j,\overline{b}} \!\!&\!\! \bm{J}_{j,k}\\ 
	\bm{J}_{k,i} \!\!&\!\! \bm{J}_{k,j} \!\!&\!\! \bm{I}_{k,k}
\end{bmatrix}.
\]  


\vspace{-0.3em}
\noindent After some elementary transformations, one has
\vspace{-0.3em}
\begin{align*}
	\text{adj}(I(B_n))+\bm{D}_{{n,X}}\rightarrow
	\begin{cases}
			\bm{D}_{n,i+\overline{b}+k+1},
		& \text{if~} i+k \text{~is~even}, \\
			\bm{D}_{n,i+\overline{b}+k},
		& \text{if~} i+k \text{~is~odd}.
	\end{cases}  
\end{align*}

\vspace{-0.3em}
\noindent We can calculate that 
$\text{rank}(\text{adj}(I(B_n))+\bm{D}_{n,X})_{\mathbb{Z}_2} = 2\left\lfloor \frac{i+k}{2} \right\rfloor+1+(j-|X\cap B_{\text{index}}|).$ Thus we have 
\vspace{-0.5em}
\begin{align}\label{main4_eq3}
\sum_{C_{\text{index}}\subsetneqq X \subseteq (B\cup C)_{\text{index}}} z^{{\text{rank}(\text{adj}(I(B_n))+\bm{D}_{n,X})}_{\mathbb{Z}_2}}=\sum_{b=1}^{j}\binom{j}{b} z^{2\left\lfloor \frac{i+k}{2} \right\rfloor+1+(j-b)}=
\sum_{b=0}^{j-1}\binom{j}{b} z^{2\left\lfloor \frac{i+k}{2} \right\rfloor+1+b}.
\end{align}

\vspace{-0.5em}
\noindent By symmetry of $i,j$, if $X$ satisfies only C3 and C2, one has that
$C_{\text{index}}\subsetneqq X \subseteq A_{\text{index}}\cup C_{\text{index}}$ and
\vspace{-0.6em}
\begin{align}\label{main4_eq4}
	\sum_{C_{\text{index}}\subsetneqq X \subseteq (A\cup C)_{\text{index}}} z^{{\text{rank}(\text{adj}(I(B_n))+\bm{D}_{n,X})}_{\mathbb{Z}_2}}=\sum_{a=0}^{i-1}\binom{i}{a} z^{2\left\lfloor \frac{j+k}{2} \right\rfloor+1+a}.
\end{align}

\vspace{-0.5em}
\noindent By Equations (\ref{main4_eq3}) and  (\ref{main4_eq4}), we have that 

\vspace{-1.5em}
\begin{align}\label{main4_eq5}
	P_{\mathbb{X}_{2.2}}=\sum\limits_{d = 2}^{n}\left[\overline{\binom{i}{d-1-{2\left\lfloor \frac{j+k}{2} \right\rfloor}}} +
	\overline{\binom{j}{d-1-2\left\lfloor \frac{i+k}{2} \right\rfloor}}\right]z^d
\end{align}

\vspace{-0.3em}
Since $P_{\mathbb{X}_{2}}=P_{\mathbb{X}_{2.1}}+P_{\mathbb{X}_{2.2}}$, by Equations (\ref{main4_eq2}) and (\ref{main4_eq5}), we sum over all two subcases to obtain
\vspace{-0.2em}
\begin{align}\label{main4_eqP_2}
	P_{\mathbb{X}_2}= \sum\limits_{d=2}^{n}\left[\overline{\binom{k}{d - 2\left\lceil \frac{i + j}{2} \right\rceil}}+\overline{\binom{i}{d-1-{2\left\lfloor \frac{j+k}{2} \right\rfloor}}} +
	\overline{\binom{j}{d-1-2\left\lfloor \frac{i+k}{2} \right\rfloor}}\right]z^d.
\end{align}

\vspace{-0.1em}	
\noindent\textbf{Case 3: The set $\bm{X}$ satisfies exactly one of C1, C2 and C3.}

\vspace{0.2em}			
We discuss this case by the following three subcases. Let $\mathbb{X}_3$ and $\mathbb{X}_{3.m}$ be the sets of all subsets $X$ of $\mathbb{Z}_n$ that satisfy Case $3$ and subcase $3.m$, respectively, where $m\in\{1,2,3\}$.	

\noindent\textbf{Subcase 3.1: $\bm{X}$ satisfies only C1.} 
We have that $X\subsetneqq B_{\text{index}}\cup C_{\text{index}} \text{ and } X\cap B_{\text{index}}\neq\emptyset \text{ and } X\cap C_{\text{index}}\neq C_{\text{index}}$. By applying elementary transformations, one has
\vspace{-0.1em}
\[\text{adj}(I(B_n))+\bm{D}_{{n,X}}
\!\rightarrow\!
\begin{bmatrix}
	\bm{(J-I)}_{i,i} \!\!& \!\! \bm{O}_{i,j} \!\!& \!\! \bm{J}_{i,k} \\
	\bm{O}_{j,i} \!\!& \!\! \bm{J}_{j,j}-\bm{D}_{j,\overline{b}} \!\!& \!\! \bm{J}_{j,k}\\ 
	\bm{J}_{k,i} \!\!& \!\! \bm{J}_{k,j} \!\!& \!\! \bm{D}_{k,\overline{c}}
\end{bmatrix}
\!\rightarrow\!
	\bm{D}_{n,i+\overline{b}+\overline{c}+2}.
\]

\vspace{-0.3em}
\noindent It follows that
$\text{rank}(\text{adj}(I(B_n))+\bm{D}_{{n,X}})_{\mathbb{Z}_2}
=i+(j-|X\cap B_{\text{index}}|)+|X\cap C_{\text{index}}|+2$. 
Thus
\vspace{-0.3em}
\begin{align}\label{main4_eq6}
	P_{\mathbb{X}_{3.1}}=\sum_{b=1}^{j}\sum_{c=0}^{k-1}\binom{j}{b}\binom{k}{c} z^{i+(j-b)+c+2}=\sum_{b=0}^{j-1}\sum_{c=0}^{k-1}\binom{j}{b}\binom{k}{c} z^{i+b+c+2}.
\end{align}

\noindent\textbf{Subcase 3.2: $\bm{X}$ satisfies only C2.} 
By symmetry of $i$ and $j$, if $X$ satisfies only C2, one has 
\vspace{-0.1em}
\begin{align}\label{main4_eq7}
	P_{\mathbb{X}_{3.2}}=\sum_{a=0}^{i-1}\sum_{c=0}^{k-1}\binom{i}{a}\binom{k}{c} z^{j+a+c+2}.
\end{align}

\noindent\textbf{Subcase 3.3: $\bm{X}$ satisfies only C3.} Since $X$ satisfies only C3, we have that $C_{\text{index}}\subsetneqq X \text{ and } X\cap A_{\text{index}}\neq \emptyset \text{ and } X\cap B_{\text{index}}\neq \emptyset$. By some elementary transformations, one has
\vspace{-0.3em}
\[\text{adj}(I(B_n))+\bm{D}_{n,X}
\!\rightarrow\!
\begin{bmatrix}
	\bm{J}_{i,i}-\bm{D}_{i,\overline{a}} & \bm{O}_{i,j} & \bm{J}_{i,k} \\
	\bm{O}_{j,i} & \bm{J}_{j,j}-\bm{D}_{j,\overline{b}} & \bm{J}_{j,k}\\ 
	\bm{J}_{k,i} & \bm{J}_{k,j} & \bm{I}_{k,k}
\end{bmatrix}
\!\rightarrow\!
\bm{D}_{n,\overline{a}+\overline{b}+k+2}. 
\]


\vspace{-0.3em}
\noindent It follows that 
$\text{rank}(\text{adj}(I(B_n))+\bm{D}_{n,X})_{\mathbb{Z}_2}=(i-|X\cap A_{\text{index}}|)+(j-|X\cap B_{\text{index}}|)+k+2.$ 
Thus
\vspace{-0.6em}
\begin{align}\label{main4_eq8}
P_{\mathbb{X}_{3.3}}=\sum_{a=1}^{i}\sum_{b=1}^{j}\binom{i}{a}\binom{j}{b} z^{(i-a)+(j-b)+k+2}=\sum_{a=0}^{i-1}\sum_{b=0}^{j-1}\binom{i}{a}\binom{j}{b}z^{a+b+k+2}.
\end{align}

Since $P_{\mathbb{X}_3}\!=\!P_{\mathbb{X}_{3.1}}\!+\!P_{\mathbb{X}_{3.2}}\!+\!P_{\mathbb{X}_{3.3}}$, by Equations (\ref{main4_eq6})-(\ref{main4_eq8}), the sum over all three subcases yields: 
\begin{equation}
	\begin{aligned}\!\!\!\label{main4_eq9}
		\hspace{-0.6em}P_{\mathbb{X}_3}&=  \sum\limits_{b=0}^{j-1}\sum\limits_{c=0}^{k-1} \binom{j}{b} \binom{k}{c} z^{i+b+c+2} +
		\sum\limits_{a=0}^{i-1}\sum\limits_{b=0}^{j-1} \binom{i}{a}\binom{j}{b}z^{a+b+k+2} +  \sum_{a=0}\limits^{i-1}\sum\limits_{c=0}^{k-1} \binom{i}{a}\binom{k}{c} z^{a+j+c+2}\\
		&=\sum\limits_{d=2}^{n}\left[\sum\limits_{b=0}^{j-1}\binom{j}{b}\overline{\binom{k}{d - 2 - i - b}}\!+\!\sum\limits_{a=0}^{i-1} \binom{i}{a}\Bigg(\overline{\binom{j}{d - 2 - k -a}}\!+\!\overline{\binom{k}{d - 2 - j - a}}\Bigg)
		\right]z^d.
	\end{aligned}
\end{equation}

\vspace{0.2em}
\noindent\textbf{Case 4: The set $\bm{X}$ satisfies none of C1, C2 and C3.}

Let $\mathbb{X}_4=\{X|X \cap A_{\text{index}} \neq\emptyset \text{ and }  X \cap B_{\text{index}}\neq \emptyset \text{ and }  X\cap C_{\text{index}}\neq C_{\text{index}}\}$.
Since $X$ satisfies none of C1, C2 and C3, one has that $X\in\mathbb{X}_4$ and  
\[\text{adj}(I(B_n))+\bm{D}_{n,X}\rightarrow
\begin{bmatrix}
	\bm{J}_{i,i}-\bm{D}_{i,\overline{a}} & \bm{O}_{i,j} & \bm{J}_{i,k} \\
	\bm{O}_{j,i} & \bm{J}_{j,j}-\bm{D}_{j,\overline{b}} & \bm{J}_{j,k}\\ 
	\bm{J}_{k,i} & \bm{J}_{k,j} & \bm{D}_{k,\overline{c}}
\end{bmatrix}\rightarrow
\bm{D}_{n,\overline{a}+\overline{b}+\overline{c}+2},
\]
by some elementary transformations. 
The rank of this matrix over $\mathbb{Z}_2$ is  $\text{rank}(\text{adj}(I(B_n))+\bm{D}_{n,X})_{\mathbb{Z}_2}=(i-|X\cap A_{\text{index}}|)+(j-|X\cap B_{\text{index}}|)+|X\cap C_{\text{index}}|+2,$ 
thus
\vspace{-0.3em}
\begin{align*}
P_{\mathbb{X}_4}&=\sum_{a=1}^{i}\sum_{b=1}^{j}\sum_{c=0}^{k-1}\binom{i}{a}\binom{j}{b}\binom{k}{c} z^{(i-a)+(j-b)+c+2}=\sum_{a=0}^{i-1}\sum_{b=0}^{j-1}\sum_{c=0}^{k-1}\binom{i}{a}\binom{j}{b}\binom{k}{c} z^{a+b+c+2},
\end{align*}
which implies that
\vspace{-0.6em}
\begin{align}\label{main4_eq10}
	P_{\mathbb{X}_4}&=\sum\limits_{d=2}^{n}\sum\limits_{a = 0}^{i-1}\sum\limits_{b=0}^{j-1}\binom{i}{a} \binom{j}{b} \overline{\binom{k}{d-2-a-b}}z^d.
\end{align}

Summing over all four cases (Equations (\ref{main4_eq1}),(\ref{main4_eqP_2}),(\ref{main4_eq9}),(\ref{main4_eq10})), the partial Petrial polynomial is
\vspace{-0.3em}
 $$^{\partial}{\varepsilon_{B_n}^{\times}}(z)=\sum\limits_{X\in\mathbb{Z}_n} z^{{\text{rank}(\text{adj}(I(B_n))+\bm{D}_{n,X})}_{\mathbb{Z}_2}}=\sum\limits_{l=1}^{4}\sum\limits_{X\in\mathbb{X}_l} z^{{\text{rank}(\text{adj}(I(B_n))+\bm{D}_{n,X})}_{\mathbb{Z}_2}}= P_{\mathbb{X}_1}+P_{\mathbb{X}_2}+P_{\mathbb{X}_3}+P_{\mathbb{X}_4}.$$ Sum up the coefficients of $z$ to the power of $d$, $2\leq d\leq n$. The proof is complete.
\end{proof}

\section{Conclusion}
In this paper, we introduce an equivalent representation of the partial Petrial polynomial of bouquets from the perspective of the matrix rank, which transforms problems in topological graph theory into an algebraic framework. Furthermore, we derive a recursive formula for the partial Petrial polynomial of ribbon graphs and explicitly determine this polynomial for bouquets whose intersection graphs are cycles, where the resulting polynomial is a trinomial. 
Finally, we provide a characterization of the bouquets whose partial Petrial polynomials have the second-order term as the lowest-order term and further compute their explicit partial Petrial polynomials. In addition, we can calculate $^{\partial}{\varepsilon_{B_n}^{\times}}(z)$ if the intersection graph of $B_n$ is a complete graph (Theorem \ref{K_n}) using Lemma \ref{rankofpoly_Petrial}. In the future, we propose the following natural problem:

\begin{Problem}
Can we find an algorithm that, based on its intersection graph, determines a Euler-genus-minimizing partial Petrial graph of $B_n$\emph{?}
\end{Problem}


\section*{Acknowledgments}

\indent
This work was partially supported by the National Natural Science Foundation of China
(Nos. 12471321 and 12331013).

\noindent{\bf Data availability statement:} Data sharing not applicable to this article as no datasets were generated or analyzed during the current study.

\end{document}